\theoremstyle{definition}
\newtheorem{definition}{Definition}[section]
\theoremstyle{remark}
\theoremstyle{plain}
\theoremstyle{plain}
\newtheorem{cor}[definition]{Corollary}
\theoremstyle{plain}
\theoremstyle{plain}
\newtheorem{lemma}[definition]{Lemma}
\theoremstyle{plain}
\newtheorem{theorem}[definition]{Theorem}
\newtheorem{theoremA}[]{Theorem}
\theoremstyle{remark}
\theoremstyle{remark}
\newtheorem{notation}[definition]{Notation}
\theoremstyle{definition}
\newcommand{\D}{\mathrm{D}}
\newcommand{\Aut}{\mathrm{Aut}}
\newcommand{\Syl}{\mathrm{Syl}}
\newcommand{\norm}{\mathrel{\unlhd}}
\def \Syl {\hbox {\rm Syl}}
\title[On finite groups, arithmetic and geometric means of element orders]{On the structure of finite groups determined by the arithmetic and geometric means of element orders}
\author[V. Grazian]{Valentina Grazian}
\address{Valentina Grazian: Department of Mathematics and Applications, University of Milano - Bicocca, Via Roberto Cozzi 55, 20125 Milano, Italy} 
\email{valentina.grazian@unimib.it}
\author[C. Monetta]{Carmine Monetta}
\address{Carmine Monetta: Department of Mathematics, University of Salerno, via Giovanni Paolo II 132, 84084 Fisciano (SA), Italy}
\email{cmonetta@unisa.it}
\author[M. Noce]{Marialaura Noce}
\address{Marialaura Noce: Department of Mathematics, University of Salerno, via Giovanni Paolo II 132, 84084 Fisciano (SA), Italy}
\email{mnoce@unisa.it}
\keywords{Group element orders, $p$-nilpotent groups}
\subjclass[2010]{20D60, 20E34, 20F16, 20F19}
\begin{document}

\maketitle
\begin{abstract}
In this paper we consider two functions related to the arithmetic and geometric means of element orders of a finite group, showing that certain lower bounds on such functions strongly affect the group structure. In particular, for every prime $p$, we prove a sufficient condition for a finite group to be $p$-nilpotent, that is, a group whose elements of $p'$-order form a normal subgroup. Moreover, we characterize finite cyclic groups with prescribed number of prime divisors.
\end{abstract}

\section{Introduction}
Let $p$ be  a prime number. A finite group $G$ is said to be $p$-nilpotent if it admits a normal $p$-complement, that is, a normal subgroup $H$ of $G$ of $p'$-order and $p$-power index in $G$. The search for $p$-nilpotency criteria has been the subject of various works. In the last century the problem was approached looking at normalizers of $p$-subgroups of $G$, as in the famous Frobenius normal $p$-complement theorem, \cite[10.3.2]{ROB}. However, in recent years, the study of $p$-nilpotency took a different direction, focusing on properties of elements' orders of the group $G$. For instance in \cite{BS} and \cite{CGM}, some $p$-nilpotency criteria are obtained investigating the order of the product of element of coprime orders.  

The aim of this work is to characterize $p$-nilpotency in terms of functions related to the arithmetic and geometric means of the orders of all elements of $G$.

For a finite group $G$, let $\psi(G)$ and $\rho(G)$ denote the sum and the product of element orders of $G$, respectively. For any positive integer $n$ and any group $G$ of order $n$, if we denote by $C_n$ the cyclic group of order $n$, then $\psi(G) \leq \psi(C_n)$  and $\rho(G) \leq \rho(C_n)$, and the equalities  hold if and only if $G$ is cyclic (see \cite{AJI, GP}). When $G$ is a non-cyclic group of order $n$, these upper bounds can be sharpened using other functions depending both on $n$ and on the least prime number dividing $n$ (for example see \cite{HML1, DMN}). As a result one can obtain criteria or necessary conditions for a finite group to be nilpotent, soluble or supersoluble as in \cite{HML3, HML4}: we refer to \cite{HML2} for a survey on this topic. Recently, some functions related to $\psi(G)$ and $\rho(G)$ have been considered, namely the functions $$\psi''(G)=\displaystyle \frac{\psi(G)}{|G|^2} \quad \text{ and  } \quad l(G) = \displaystyle \frac{\rho(G)^{1/|G|}}{|G|}.$$

We point out that $\psi''(G) \cdot|G|$ and $l(G) \cdot|G|$ are the arithmetic and geometric means of the element orders of $G$, respectively. In particular $\psi''(G) \geq l(G)$. For the convenience of the reader, we report some approximated values of $\psi''(G)$ and $l(G)$ for certain finite groups, computed with the software GAP: 

\begin{table}[H]
    \begin{tabular}{| c | c | c | c | c | c |}
         \hline
         $G$ & $C_2 \times C_2$ & $Q_8$ & $S_3$ & $A_4$ & $A_5$\\
         \hline
         $\psi''(G)$ & 0.437 & 0.422 & 0.361 & 0.215 & 0.059\\
        $l(G)$ & 0.420 & 0.385 & 0.339 & 0.206 & 0.054\\
        \hline
    \end{tabular}
    \caption{Some values of $\psi''(G)$ and $l(G)$.}
\end{table}

where $Q_8$, $S_n$ and $A_n$ denote the quaternion group of order $8$, the symmetric group and the alternating group of degree $n$, respectively.

One of the reasons to focus on the function $l$ instead of the function $\rho$ is that only the first one is multiplicative under certain assumptions. More precisely, If $G=H \times K$ is the direct product of two finite groups of coprime orders, then $\rho(G)=\rho(H)^{|K|} \rho(K)^{|H|}$ (see \cite{DMN}), but $l(G)=l(H)l(K)$ (see Lemma \ref{prop}(2)). As a consequence, in the study of the function $l$ one can apply a variety of techniques that fail for the function $\rho$.

In \cite{Tarna1} and \cite{AK}, some sufficient conditions for a finite group $G$ to be cyclic, abelian, nilpotent, supersoluble, and soluble have been proved using the values of the functions $\psi''(G)$ and $l(G)$.

\begin{theorem}{\cite[Theorem 1.1]{Tarna1} and \cite[Theorem 1.1 and 1.2]{AK}}\label{Thm1.1}
Let $G$ be a finite group and let $f \in \{\psi'', l\}$.
\begin{itemize}
\item[(a)] If $f(G) > f(C_2 \times C_2)$, then $G$ is cyclic.
\item[(b)] If $f(G) > f(Q_8)$, then $G$ is abelian.
\item[(c)] If $f(G) > f(S_3)$, then $G$ is nilpotent.
\item[(d)] If $f(G) > f(A_4)$, then $G$ is supersoluble.
\item[(e)] If $f(G) > f(A_5)$, then $G$ is soluble.
\end{itemize}
\end{theorem}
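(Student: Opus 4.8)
\emph{Sketch of a proof.} I would prove the five implications in parallel, first isolating the handful of structural features of $\psi''$ and $l$ that are actually used, and then running the same reduction for each property.

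\textbf{The toolkit.} I record four properties common to $f=\psi''$ and $f=l$. (i) \emph{Multiplicativity on coprime factors}: if $\gcd(|H|,|K|)=1$ then $f(H\times K)=f(H)f(K)$; for $l$ this is Lemma~\ref{prop}, and for $\psi''$ it follows from $\psi(H\times K)=\psi(H)\psi(K)$. (ii) \emph{Quotient monotonicity}: $f(G)\le f(G/N)$ for $N\trianglelefteq G$. Indeed, for $x\in G$ and $n\in N$ one has $o(xN)\mid o(xn)\mid o(xN)\,|N|$, so summing (resp.\ multiplying) element orders coset by coset gives $\psi(G)\le|N|^{2}\psi(G/N)$ and $\rho(G)\le|N|^{|G|}\rho(G/N)^{|N|}$, whence $f(G)\le f(G/N)$. (iii) \emph{Subgroup monotonicity}: $f(G)\le f(H)$ for $H\le G$, equivalently $\psi(G)\le[G:H]^{2}\psi(H)$ and its $\rho$-analogue, which one obtains by the coset bookkeeping of (ii) together with the extremality in (iv). (iv) \emph{Extremality}: $f(G)\le f(C_{|G|})$, with equality iff $G$ is cyclic (\cite{AJI,GP,DMN}). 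Finally one records the exact small values, in particular the chain $f(A_5)<f(A_4)<f(S_3)<f(Q_8)<f(C_2\times C_2)<1$ read off from the table above.

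\textbf{Reduction.} Fix one of the five pairs $(P,X)$ with $P$ the property and $X\in\{C_2\times C_2,Q_8,S_3,A_4,A_5\}$, and assume $f(G)>f(X)$. Each of the classes $P\in\{$cyclic, abelian, nilpotent, supersoluble, soluble$\}$ is closed under subgroups and quotients, so if $G$ fails $P$ then some section $S$ of $G$ is a minimal non-$P$ group (all of whose proper subgroups and quotients have $P$). By (ii)--(iii), $f(G)\le f(S)$, so it suffices to show $f(S)\le f(X)$ for every minimal non-$P$ group $S$. Here one invokes the relevant classifications: the minimal noncyclic groups ($C_p\times C_p$, $Q_8$, and the nonabelian groups of order $pq$); the Miller--Moreno classification of minimal nonabelian groups; the Schmidt classification of minimal non-nilpotent groups; the classification of minimal non-supersoluble groups; and, for solubility, the fact that a minimal non-soluble group has a section isomorphic to one of Thompson's minimal simple groups $\PSL(2,2^{p})$, $\PSL(2,3^{p})$, $\PSL(2,q)$ (with $q>3$ prime, $q\equiv\pm2\pmod 5$), $\mathrm{Sz}(2^{p})$, $\PSL(3,3)$.

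\textbf{The estimates, and the main obstacle.} What remains is purely the verification of $f(S)\le f(X)$ as $S$ runs over each list, and this is where the real work lies; in every case the extremal member turns out to be precisely $X$. For the finitely many small groups ($Q_8$ versus $D_8$, and $C_2\times C_2$, $C_3\times C_3$, $S_3$, $A_4$, $A_5$ among the relevant minimal configurations) this is a direct GAP-style computation. The delicate part is the \emph{unbounded} sub-families: Schmidt groups $P\rtimes\langle y\rangle$ with $|P|$ or $o(y)$ large, Miller--Moreno and minimal non-supersoluble groups of unbounded order, and the minimal simple groups with growing parameter. For these one shows that $f(S)$ falls below the threshold by combining (iv), i.e.\ $f(S)\le f(C_{|S|})$, with a sharper estimate exploiting that $S$ has a large normal subgroup whose nontrivial elements all have small order, so that $\psi(S)$ is far below $\psi(C_{|S|})$; and for $\PSL(2,q)$, $\mathrm{Sz}(q)$, $\PSL(3,3)$ one uses their known spectra together with the fact that the largest element order is $O(|S|^{1/3})$ while $|S|$ grows, so $f(S)=O(|S|^{-2/3})\to 0$ and only finitely many explicit checks remain — the smallest being $A_5=\PSL(2,4)=\PSL(2,5)$ itself, which attains the bound. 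Carrying all of this through \emph{simultaneously} for $\psi''$ and for $l$ — the latter made more tractable by the multiplicativity in (i) — is the crux of the argument.
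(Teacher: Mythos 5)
This statement is not proved in the paper at all: it is imported verbatim from \cite{Tarna1} and \cite{AK}, so there is no internal proof to compare yours against. Judged on its own terms, your sketch is a coherent strategy outline but it has a genuine gap at the one place where your reduction differs from the quotient argument the paper does prove (Lemma~\ref{lem:strict.quotient}): step (iii), subgroup monotonicity $f(G)\le f(H)$ for an \emph{arbitrary} subgroup $H\le G$. Your justification --- ``the coset bookkeeping of (ii)'' --- does not transfer to non-normal $H$: there is no quotient group, no element $xH$ whose order you can speak of, and no single integer $t$ with $(xh)^{t}\in H$ for every $h$ in a fixed coset, so neither $\psi(G)\le[G:H]^{2}\psi(H)$ nor $\rho(G)\le |H|^{|G|}\rho(H)^{[G:H]}$ follows from that argument. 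For $\psi$ an inequality of this type does exist in the literature (with a different proof, via $[\langle y\rangle:\langle y\rangle\cap H]\le[G:H]$ and a careful count), but for $\rho$ and hence for $l$ you would have to supply a proof; the crude bound one gets this way only compares $l(G)$ with $M_H/|H|$ ($M_H$ the maximal element order of $H$), which exceeds $l(H)$, so it does not close. Since your entire reduction to minimal non-$P$ sections rests on (iii), this is not a cosmetic omission.

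The second problem is that, even granting (i)--(iv), everything that makes the theorem true is deferred: the verification of $f(S)\le f(X)$ over the classified minimal non-$P$ families is exactly the content of the result (note how tight some of these are, e.g.\ $\psi''(Q_8)=27/64$ against $\psi''(C_2\times C_2)=28/64$), and several of the families are infinite --- the minimal noncyclic groups include the groups $\langle a,b\mid a^{p}=b^{q^{m}}=1,\ a^{b}=a^{r}\rangle$ of order $pq^{m}$ with $m$ unbounded, not just order $pq$ as you state, besides the Schmidt groups and the minimal simple groups. A proof must actually carry out these estimates. For what it is worth, the cited sources take a different and shorter route for (a)--(d): from $f(G)>\alpha$ one extracts an element $x$ with $[G:\langle x\rangle]<\alpha^{-1}$ (this is Lemma~\ref{prop}(3) of the paper, whose proof is just ``mean $\le$ max''), and then one analyses the finitely many configurations of a group with a cyclic subgroup of bounded index; only (e) genuinely requires the minimal simple groups $\PSL(2,2^{p})$, $\PSL(2,3^{p})$, $\PSL(2,q)$, $\mathrm{Sz}(2^{p})$, $\PSL(3,3)$ in the way you describe. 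I would recommend either adopting that route or, if you keep yours, proving (iii) for both $\psi''$ and $l$ and writing out the extremality computations for each family.
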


In this paper we will focus our attention on the $p$-nilpotency of a finite group with respect to the function $l(G)$. To begin with, assume $p=2$. Then a natural lower bound to consider is $l(A_4)$. Indeed, recalling that supersoluble groups are $2$-nilpotent (see for example \cite[5.4.9]{ROB}), the following is an immediate consequence of \cite[Theorem 1.1]{AK}.

\begin{theoremA}\label{p=2}
 Let $G$ be a finite group. If $l(G) > l(A_4)$ then $G$ is $2$-nilpotent. 
\end{theoremA}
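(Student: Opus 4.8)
The plan is to obtain Theorem~A as a direct corollary of Theorem~\ref{Thm1.1}(d), specialised to $f = l$, combined with the classical structural fact that every supersoluble group has a normal $2$-complement. Concretely: if $2 \nmid |G|$ there is nothing to prove, since then $G$ itself is a normal subgroup of $2'$-order and of index $1 = 2^0$, hence a normal $2$-complement. So I would assume $2 \mid |G|$. Since $l(G) > l(A_4)$, Theorem~\ref{Thm1.1}(d) applies and yields that $G$ is supersoluble; note this is the natural bound to use here because $l(A_4) < l(S_3)$, so the hypothesis is strictly weaker than the one forcing nilpotency.

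It then remains to recall why a supersoluble group $G$ is $2$-nilpotent, $2$ being the smallest prime divisor of $|G|$. A supersoluble group admits an \emph{ordered Sylow tower}: if $p_1 > p_2 > \cdots > p_r$ are the primes dividing $|G|$, there is a chain $1 = N_0 \norm N_1 \norm \cdots \norm N_r = G$ of normal subgroups of $G$ such that each $N_i / N_{i-1}$ is a Sylow $p_i$-subgroup of $G/N_{i-1}$ (see \cite[5.4.8]{ROB}). Since $2$ is the least prime divisor of $|G|$ we have $p_r = 2$, so $N_{r-1} \norm G$ has order a $2'$-number, while $|G : N_{r-1}| = |N_r/N_{r-1}|$ is a power of $2$. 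Thus $N_{r-1}$ is a normal $2$-complement of $G$ and $G$ is $2$-nilpotent; this is exactly \cite[5.4.9]{ROB}.

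I do not expect any genuine obstacle in this argument: all the work is concentrated in Theorem~\ref{Thm1.1}(d) (proved in \cite{AK} by bounding $l$ on the minimal non-supersoluble groups) and in the classical theory of supersoluble groups, both of which may be quoted. The one spot deserving a line of care is the degenerate case $2 \nmid |G|$, which must be disposed of first, since there the conclusion ``$2$-nilpotent'' has to be read with the trivial complement $H = G$. One could alternatively bypass supersolubility and argue directly that the product of the odd-order Sylow subgroups supplied by the Sylow tower is the required normal subgroup, but routing the proof through Theorem~\ref{Thm1.1}(d) is the cleanest option and is the one suggested by the placement of this statement immediately after Theorem~\ref{Thm1.1}.
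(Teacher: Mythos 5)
Your proposal is correct and follows exactly the route the paper takes: the authors also deduce Theorem~\ref{p=2} immediately from Theorem~\ref{Thm1.1}(d) together with the fact that supersoluble groups are $2$-nilpotent, citing \cite[5.4.9]{ROB} for the latter rather than unwinding the Sylow tower as you do. No gaps; your extra care about the case $2 \nmid |G|$ is harmless but not needed beyond a remark.
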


Since $A_4$ is not $2$-nilpotent, we point out that the bound above is the best possible.

Now, let $p$ be an odd prime, and consider the value
\[ l(D_{2p})= 2^{-\frac{1}{2}} \cdot  p^{-\frac{p+1}{2p}},\]
where $D_{2n}$ denotes the dihedral group of order $2n$.
Note that $l(D_{2p})$ is a decreasing function of $p$. 
\begin{table}[H]
    \centering
    \begin{tabular}{| c | c | c | c | c | c || c |}
    \hline
         $p$ & $3$ & $5$ & $7$ & 11 & 13 & 173\\
         \hline
        $l(D_{2p})$ & 0.340 & 0.270 & 0.233 & 0.191 & 0.178 & 0.055 \\
        \hline
    \end{tabular}
    \label{tab:my_label}
\smallskip

    \caption{Some approximated values of $l(D_{2p})$.}
\end{table}

If $G$ is a finite group and $p$ is a prime, we denote by $O_p(G)$ and $O_{p'}(G)$ the largest normal $p$-subgroup and $p'$-subgroup of $G$, respectively. Our main result is the following:
\begin{theoremA}\label{main} Let $G$ be a finite group and let $p$ be an odd prime dividing the order of $G$. Suppose \begin{equation*}l(G) \geq l(D_{2p}). \end{equation*} Then either
$G \cong D_{2p}$ or
$l(G) > l(D_{2p})$ and $G = O_p(G) \times O_{p'}(G)$ with $O_p(G)$ cyclic. In particular, if $l(G) > l(D_{2p})$, then $G$ is $p$-nilpotent.
\end{theoremA}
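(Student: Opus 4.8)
My plan is to argue by a structural dichotomy on whether or not $G$ is $p$-nilpotent, using three tools throughout: the multiplicativity $l(H\times K)=l(H)l(K)$ for coprime direct factors (Lemma~\ref{prop}(2)); the quotient inequality $l(G)\le l(G/N)$ for $N\trianglelefteq G$, which follows at once from the pointwise bound $o(g)\le |N|\,o(gN)$ and which is moreover strict whenever $N\neq 1$; and Theorem~\ref{Thm1.1}. I will also use that $l(G)$ and $l(D_{2p})$, being products of rational powers of primes, coincide only if their prime-exponent data agree: since the exponent of $p$ in $l(D_{2p})$ equals $-(p+1)/(2p)$, this lets me promote $l(G)\ge l(D_{2p})$ to $l(G)>l(D_{2p})$ once the structure of $G$ is pinned down.

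First suppose $G$ is not $p$-nilpotent; I claim $G\cong D_{2p}$. Pick $S\le G$ minimal with respect to not being $p$-nilpotent; by It\^o's classification of minimal non-$p$-nilpotent groups, $S=P_S\rtimes C$ is a Schmidt group with $P_S$ its normal Sylow $p$-subgroup and $C$ cyclic of prime-power order $q^b$, $q\neq p$. If $q=2$: then $C$ acts on the chief factor $P_S/\Phi(P_S)$ through a group of order $2$, hence one-dimensionally over $\mathbb{F}_p$ (because $-1\in\mathbb{F}_p$), so $P_S$ is cyclic and $C$ inverts it; thus $S\cong D_{2p^c}$, which is minimal non-nilpotent only when $c=1$ (otherwise it contains the non-nilpotent $D_{2p^{c-1}}$), so $S\cong D_{2p}$. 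If $q$ is odd: then $|S|=|P_S|\,q^b\ge 3p$, and from the element orders of the Schmidt group $S$ (elements of $P_S$ have order dividing $p$ since $p$ is odd; elements outside $P_S$ have order divisible by $q$) one computes $l(S)<l(D_{2p})$, whence $l(G)\le l(S)<l(D_{2p})$, contradicting the hypothesis. So $S\cong D_{2p}$; then $l(D_{2p})\le l(G)\le l(S)=l(D_{2p})$ forces equality throughout, and since $l$ strictly drops on proper over-groups, $G=S\cong D_{2p}$.

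Now suppose $G$ is $p$-nilpotent, and write $G=K\rtimes P$ with $K=O_{p'}(G)\trianglelefteq G$ and $P\in\Syl_p(G)$. If $P$ were non-cyclic, then $P/\Phi(P)\cong C_p^{\,d}$ with $d\ge 2$, and combining $l(G)\le l(G/K)=l(P)\le l(P/\Phi(P))=l(C_p^{\,d})$ with the identity $l(C_p^{\,d})=p^{-(d-1)-p^{-d}}$ (decreasing in $d$) and the numerical inequality $l(C_p\times C_p)<l(D_{2p})$, we would get $l(G)<l(D_{2p})$; so $P$ is cyclic. If $P$ did not centralise $K$, then by the theory of coprime actions $G$ would contain (after possibly factoring out the normal subgroup $C_P(K)$) a subgroup $V\rtimes C_{p^c}$ with $V$ elementary abelian of exponent $q\neq p$ and $C_{p^c}$ acting faithfully and irreducibly on $V$; the element orders of $V\rtimes C_{p^c}$ are then completely determined by the action (in particular, for $c=1$ every element outside $V$ has order $p$, the norm map being fixed-point-free), so $\rho(V\rtimes C_{p^c})$ can be written down explicitly and $l(V\rtimes C_{p^c})<l(D_{2p})$, again forcing $l(G)<l(D_{2p})$. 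Hence $G=P\times K$, so $O_p(G)=P$, $O_{p'}(G)=K$, and $G=O_p(G)\times O_{p'}(G)$ with $O_p(G)=C_{p^a}$ cyclic. Finally $l(G)=l(C_{p^a})\,l(K)$, where $l(K)$ involves no power of $p$ while the exponent of $p$ in $l(C_{p^a})$ is $-\frac{p^a-1}{(p-1)p^a}\neq-\frac{p+1}{2p}$; thus $l(G)\neq l(D_{2p})$, and therefore $l(G)>l(D_{2p})$.

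The hard part will be the battery of explicit inequalities that must be verified uniformly in $p$ (and in $q$, $b$, and the isomorphism type of $P_S$ or $V$): $l(C_p\times C_p)<l(D_{2p})$, $l(V\rtimes C_{p^c})<l(D_{2p})$ for the faithful irreducible configurations, and $l(P_S\rtimes C_{q^b})<l(D_{2p})$ for Schmidt groups with $q$ odd. The main levers are the closed forms $l(C_{p^a})=p^{-(p^a-1)/((p-1)p^a)}$ and $l(D_{2p})=2^{-1/2}p^{-(p+1)/(2p)}$, the latter decreasing in $p$, together with the observation that within each family the extremal value is attained at the smallest member. A second point requiring care is the use of $l$-monotonicity along the subgroups and sections invoked above (the passage to $S$, and to $V\rtimes C_{p^c}$): if no sufficiently general subgroup-monotonicity statement is at hand, one reorganises the whole argument as a minimal-counterexample induction that uses only the elementary, strict quotient inequality $l(G)<l(G/N)$ for $N\neq 1$, together with the structural results above.
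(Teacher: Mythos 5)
Your proposal is built on a different skeleton from the paper's (a dichotomy via minimal non-$p$-nilpotent subgroups and Schmidt groups, versus the paper's minimal-counterexample induction), and parts of it are sound: the reduction ``$P$ cyclic'' via $l(G)\le l(G/K)\le l(P/\Phi(P))=l(C_p^{\,d})<l(D_{2p})$ uses only quotient monotonicity and a checkable inequality, and your prime-exponent argument for promoting $l(G)\ge l(D_{2p})$ to a strict inequality once $G=C_{p^a}\times K$ is established is valid. However, the load-bearing step in both halves of your argument is monotonicity of $l$ under passage to \emph{subgroups}: you need $l(G)\le l(S)$ for the minimal non-$p$-nilpotent subgroup $S$ (and strictly, when $S<G$, to force $G=S\cong D_{2p}$), and $l(G)\le l(V\rtimes C_{p^c})$ for the Frobenius-type section in the $p$-nilpotent branch. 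No such statement is available: the paper proves monotonicity only for quotients (Lemma \ref{lem:strict.quotient}) and for subgroups of \emph{cyclic} groups (Lemma \ref{prop}(1)), and the coset argument behind the quotient inequality has no analogue for a non-normal subgroup. You flag this yourself and propose to ``reorganise the whole argument as a minimal-counterexample induction using only the quotient inequality,'' but that reorganisation \emph{is} the proof: one must first kill $O_{p'}(G)$ and $\Phi(G)$ by minimality, deduce $P\cong C_p$ and $C_G(P)=P$, and then shrink the $p'$-complement $H\le\Aut(C_p)$ to a single prime using that $\theta_p$ is decreasing together with Lemma \ref{prop}(1) applied to the cyclic group $H$. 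None of this is carried out, so the fallback is a placeholder, not an argument.

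The second gap is that the numerical core of the theorem is asserted rather than verified. You need $l(S)<l(D_{2p})$ uniformly over all Schmidt groups $S=P_S\rtimes C_{q^b}$ with $q$ odd, and $l(V\rtimes C_{p^c})<l(D_{2p})$ over all faithful irreducible configurations; these involve parameters $a=\log_p|P_S|$, $b$, $c$, $d=\dim V$ and the prime $q$, which is constrained only by $q\mid |\GL_a(p)|$, not by $q\mid p-1$. The paper's corresponding computation (Lemma \ref{lem:semidirect}(2)) treats only the case $|P|=p$, $|H|=q$ prime, and its contradiction hinges on $q\mid p-1$, hence $p>2q$ --- an inequality that is simply unavailable in your generality (e.g.\ $C_3^3\rtimes C_{13}$ is a minimal non-$3$-nilpotent group with $q=13>p=3$). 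The estimates may well all go the right way, but as written they constitute the entire difficulty of the theorem and are left unproved; the paper's structural reductions exist precisely so that only the one tractable case of Lemma \ref{lem:semidirect}(2) ever needs to be checked.
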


We point out that if $l(G)=l(\D_{2n})$, where $n$ is an odd integer not necessarily prime, then in general it is not true that $G \cong D_{2n}$. For instance, $l(S_3 \times C_3) = l(\D_{18})$.
Moreover, the lower bound of Theorem \ref{main} is the best possible for every odd prime $p$, as the group $D_{2p}$ is not $p$-nilpotent. We remark that in \cite[Main Theorem]{AKpnil}, the authors proved that a similar result holds with respect to the function $\psi''(G)$, that is, if $p$ is an odd prime and $\psi''(G) > \psi''(\D_{2p}) = \frac{p^2+p+1}{4p^2}$ then $G = O_p(G) \times O_{p'}(G)$ with $O_p(G)$ cyclic. 
Furthermore, in the recent work \cite[Theorem 1.2]{BAKJ}, it is proved that if $p>5$, $\psi''(G) = \psi''(D_{2p})$ and $G$ is not $p$-nilpotent then $G \cong D_{2p}$.

In \cite{AKpnil}, the authors also show that if $\psi''(G) > \psi''(\D_{2p})$ then $G$ is supersoluble with second derived subgroup contined in the center of $G$, namely, $[G',G'] \leq Z(G)$. However, this does not hold in general when $l(G) > l(\D_{2p})$, representing an important difference among the two functions $\psi''$ and $l$. Indeed, there are infinite families of non-supersoluble finite groups satisfying the hypothesis of Theorem \ref{main}:  
\begin{itemize}
\item if $p > 13$ then $l(C_p \times A_4) > l(D_{2p})$ and $C_p \times A_4$ is not supersoluble;
\item if $p > 173$ then $l(C_p \times A_5) > l(D_{2p})$ and $C_p \times A_5$ is not soluble.
\end{itemize}

However, for small primes $p$, we can actually prove that if $f \in \{\psi'', l\}$ and $G$ is a group whose order is divisible by $p$ satisfying $f(G)> f(D_{2p})$, then $G$ is supersoluble and, in some cases, even cyclic or nilpotent: 

\begin{theoremA}\label{p=3} Let $G$ be a finite group whose order is divisible by the odd prime $p$. Suppose  $f \in \{\psi'', l\}$ and $f(G) > f(D_{2p})$.
\begin{enumerate}
\item If $p=3$ then $G$ is cyclic.
\item If $p \leq 5$ then $G$ is nilpotent.
\item If $p \leq 13$ then $G$ is supersoluble.
\end{enumerate}
\end{theoremA}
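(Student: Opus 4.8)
The plan is to reduce the statement to the structural dichotomy already available, to the multiplicativity of $f$ on coprime direct products, and to a short list of numerical comparisons. Since $p$ is an odd prime, parts~(1), (2) and (3) concern the primes $p=3$, then $p\in\{3,5\}$, then $p\in\{3,5,7,11,13\}$. Because $p$ divides $|G|$ and $f(G)>f(D_{2p})$ \emph{strictly}, Theorem~\ref{main} (when $f=l$) and \cite[Main Theorem]{AKpnil} (when $f=\psi''$) apply and give $G=O_p(G)\times O_{p'}(G)$ with $P:=O_p(G)$ cyclic. Write $P\cong C_{p^{a}}$ with $a\ge 1$ and $K:=O_{p'}(G)$, so $\gcd(|P|,|K|)=1$. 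Multiplicativity of $f$ on coprime direct products — Lemma~\ref{prop}(2) for $l$, and $\psi(H\times K)=\psi(H)\psi(K)$ together with $|H\times K|^{2}=|H|^{2}|K|^{2}$ for $\psi''$ — then gives
\[
f(G)=f(C_{p^{a}})\,f(K).
\]

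Next I would record that $f$ is non-increasing along cyclic $p$-groups: an elementary evaluation of $\psi$ and $\rho$ on $C_{p^{a}}$ yields
\[
\psi''(C_{p^{a}})=\frac{p+p^{-2a}}{p+1},\qquad l(C_{p^{a}})=p^{-\frac{1-p^{-a}}{p-1}},
\]
and both are maximal at $a=1$. Hence $f(C_{p^{a}})\le f(C_{p})$ for every $a\ge 1$, and so $f(G)\le f(C_{p})\,f(K)$.

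The core of the argument is a proof by contradiction via Theorem~\ref{Thm1.1}. Suppose $G$ is not cyclic (resp.\ not nilpotent, not supersoluble). As $P$ is cyclic and $\gcd(|P|,|K|)=1$, the group $G$ is cyclic (resp.\ nilpotent, supersoluble) if and only if $K$ is; hence $K$ fails the corresponding property, and Theorem~\ref{Thm1.1} forces $f(K)\le f(C_{2}\times C_{2})$, $f(K)\le f(S_{3})$, or $f(K)\le f(A_{4})$, respectively. Substituting into $f(G)\le f(C_{p})f(K)$ and comparing with $l(D_{2p})=2^{-1/2}p^{-(p+1)/(2p)}$ and $\psi''(D_{2p})=\tfrac{p^{2}+p+1}{4p^{2}}$ and the values in Table~1, one verifies
\[
f(C_{p})\,f(C_{2}\times C_{2})<f(D_{2p})\quad(p=3),\qquad f(C_{p})\,f(S_{3})<f(D_{2p})\quad(p\in\{3,5\}),
\]
together with $f(C_{p})\,f(A_{4})<f(D_{2p})$ for $p\in\{3,5,7,11,13\}$. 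Each contradicts $f(G)>f(D_{2p})$, proving (1), (2) and (3).

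The only substantive work is this finite set of comparisons, and the main obstacle is that several are genuinely tight — for instance $\psi''(C_{5})\,\psi''(S_{3})=\tfrac{273}{900}<\tfrac{279}{900}=\psi''(D_{10})$ and $l(C_{13})\,l(A_{4})\approx 0.169<0.178\approx l(D_{26})$ — which is precisely why the thresholds sit at $p=3$, $p\le 5$ and $p\le 13$: for the function $l$, the non-cyclic group $C_{5}\times(C_{2}\times C_{2})$, the non-nilpotent group $C_{7}\times S_{3}$, and the non-supersoluble group $C_{17}\times A_{4}$ (see the discussion after Theorem~\ref{main}) all satisfy $l(G)>l(D_{2p})$ at the respective next prime. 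Once the direct-product reduction is in hand, nothing else is conceptually difficult.
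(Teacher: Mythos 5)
Your proposal is correct and follows essentially the same route as the paper: reduce via Theorem~\ref{main} and \cite[Main Theorem]{AKpnil} to $G=O_p(G)\times O_{p'}(G)$ with $O_p(G)$ cyclic, use multiplicativity of $f$ on coprime direct products together with $f(O_p(G))\le f(C_p)$, and then compare $f(O_{p'}(G))$ against the thresholds of Theorem~\ref{Thm1.1} at the extremal primes $p=3,5,13$. Phrasing the final step as a contrapositive of Theorem~\ref{Thm1.1} rather than applying it directly is an immaterial difference, and your numerical checks match the paper's.
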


Note that 
if $p > 3$ then $f(C_p \times Q_8)> f(D_{2p})$ and $C_p \times Q_8$ is not cyclic and
if $p > 5$ then $f(C_p \times S_3) > f(D_{2p})$ and $C_p \times S_3$ is not nilpotent,
showing that the choice of primes in Theorem \ref{p=3} is sharp.
Moreover, thanks to Theorem \ref{p=3}, we are able to establish that more groups than the ones characterized in \cite[Theorem 1.1]{Tarna1} and \cite[Theorem 1.2]{AK} are cyclic or nilpotent. For instance,
 $l(D_6) < l(C_{12}) < l(C_2 \times C_2)$,  $\psi''(D_6) < \psi''(C_{180}) < \psi''(C_2 \times C_2)$
and $f(D_{10}) < f(C_{5} \times Q_8) < f(S_3)$.

\medskip
Theorem \ref{main} also allows us to characterize nilpotency in some special cases, described by the following corollary.

\begin{cor}\label{cor:two.primes}
Let $G$ be a finite group such that $|G|=p^aq^b$ for distinct primes  $p$ and $q$, with $p\neq 2$. If $l(G) > l(D_{2p})$ then $G$ is nilpotent.
\end{cor}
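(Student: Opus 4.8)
The plan is to deduce \Cref{cor:two.primes} from \Cref{main} together with the well-known fact that a group of order $p^aq^b$ is soluble (Burnside) and that it is nilpotent precisely when it is both $p$-nilpotent and $q$-nilpotent (since a nilpotent group is the direct product of its Sylow subgroups, and conversely having normal Sylow $p$- and $q$-subgroups forces nilpotency when only two primes are involved). So the task reduces to showing that the hypothesis $l(G) > l(D_{2p})$ forces $G$ to be both $p$-nilpotent and $q$-nilpotent.

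First I would apply \Cref{main} directly: since $p$ is an odd prime dividing $|G|$ and $l(G) > l(D_{2p})$, we get $G = O_p(G) \times O_{p'}(G)$ with $O_p(G)$ cyclic (in particular $G$ is $p$-nilpotent, with normal $p$-complement $O_{p'}(G)$, which here is exactly a Sylow $q$-subgroup $Q$ of $G$ since $|G| = p^aq^b$). Thus $O_p(G)$ is the full Sylow $p$-subgroup $P$, it is cyclic, and $G = P \times Q$. But a direct product $P \times Q$ of a $p$-group and a $q$-group is automatically nilpotent, so we are done. In other words, the conclusion of \Cref{main} in the two-prime case is already stronger than $p$-nilpotency: it literally exhibits $G$ as the direct product of its two Sylow subgroups.

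I should double-check the degenerate case where the second prime $q$ does not actually divide $|G|$, i.e. $b = 0$ and $G = P$ is a $p$-group: then $O_{p'}(G)$ is trivial, $G = O_p(G)$ is cyclic by \Cref{main}, hence nilpotent, and the statement holds. Likewise if $a=0$ the hypothesis "$p$ divides $|G|$" fails and there is nothing to prove, or one simply notes $q$ is the only prime and $G$ is a $q$-group. The remaining possibility flagged by \Cref{main}, namely $G \cong D_{2p}$, is excluded here because we assume the strict inequality $l(G) > l(D_{2p})$; so no case analysis around $D_{2p}$ is needed.

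There is essentially no obstacle: the corollary is a one-line specialisation of \Cref{main}, the only thing to be careful about being the bookkeeping of which of $p,q$ is the "distinguished" prime and the trivial edge cases noted above. I would present it as: by \Cref{main}, $G = O_p(G)\times O_{p'}(G)$ with $O_p(G)$ cyclic; since $|G| = p^aq^b$ we have $O_{p'}(G)$ a $q$-group, so $G$ is the direct product of a $p$-group and a $q$-group and is therefore nilpotent. $\qed$
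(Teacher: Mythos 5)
Your proof is correct and follows essentially the same route as the paper: apply \Cref{main} to obtain $G = O_p(G)\times O_{p'}(G)$, observe that $O_{p'}(G)$ is the Sylow $q$-subgroup since $|G|=p^aq^b$, and conclude nilpotency. The extra remarks about Burnside and the degenerate cases $a=0$ or $b=0$ are harmless but unnecessary.
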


\begin{proof}
By Theorem \ref{main} we get $G = O_p(G) \times O_{p'}(G)$. From $|G|=p^aq^b$, we deduce that $O_{p'}(G) = O_{q}(G) \in \Syl_q(G)$. Hence all Sylow subgroups of $G$ are normal in $G$ and $G$ is nilpotent. 
\end{proof}

Finally, Theorem \ref{main} allows us to characterize cyclic groups whose order has few prime divisors.

\begin{cor}\label{cor:odd}
Let $G$ be a finite group of odd order and let $p$ be the smallest prime divisor of $|G|$. If $l(G) > l(D_{2p})$ then $G$ is cyclic. Moreover, if the number of distinct primes dividing $|G|$ is at most $\frac{p+1}{2}$, then $G$ is cyclic if and only if $l(G) > l(D_{2p})$.
\end{cor}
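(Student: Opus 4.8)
The plan is to derive both statements from Theorem \ref{main}, the multiplicativity $l(H\times K)=l(H)l(K)$ for $\gcd(|H|,|K|)=1$ (Lemma \ref{prop}(2)), and the fact, recorded in the excerpt, that $p\mapsto l(D_{2p})$ is decreasing.

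\emph{First assertion.} I would induct on $|G|$. Since $|G|$ is odd, $p$ is an odd prime divisor of $|G|$, so Theorem \ref{main} applies and, as $l(G)>l(D_{2p})$, gives $G=O_p(G)\times O_{p'}(G)$ with $O_p(G)$ cyclic and nontrivial. If $G$ is a $p$-group we are done. Otherwise $O_{p'}(G)\neq 1$ and, by Lemma \ref{prop}(2), $l(G)=l(O_p(G))\,l(O_{p'}(G))$. Since $l(H)<1$ whenever $|H|>1$ (every element order is at most $|H|$ while the identity has order $1$, so $\rho(H)<|H|^{|H|}$), we get $l(O_{p'}(G))>l(G)>l(D_{2p})$. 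The smallest prime $q$ dividing $|O_{p'}(G)|$ satisfies $q>p$, hence $l(D_{2q})<l(D_{2p})<l(O_{p'}(G))$, and the inductive hypothesis applied to the odd-order group $O_{p'}(G)$, of strictly smaller order, shows it is cyclic. Then $G$ is a direct product of two cyclic groups of coprime orders, so $G$ is cyclic.

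\emph{Equivalence.} The implication ``$l(G)>l(D_{2p})$ implies $G$ cyclic'' is exactly the first assertion (the hypothesis on the number of primes is not used here). Conversely, assume $G\cong C_n$ with $n=p_1^{a_1}\cdots p_k^{a_k}$, $p=p_1<\cdots<p_k$ and $k\le\frac{p+1}{2}$. A direct computation gives $l(C_{p^a})=p^{-(1-p^{-a})/(p-1)}>p^{-1/(p-1)}$, so by Lemma \ref{prop}(2)
\[
l(C_n)=\prod_{i=1}^k l\bigl(C_{p_i^{a_i}}\bigr)>\prod_{i=1}^k p_i^{-1/(p_i-1)}.
\]
Since the $p_i$ are distinct odd primes with $p_1=p$, we have $p_i\ge p+2(i-1)$, and as $x\mapsto x^{-1/(x-1)}$ is increasing this product is at least $\prod_{j=0}^{k-1}(p+2j)^{-1/(p+2j-1)}$. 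It then suffices to check
\[
\prod_{j=0}^{k-1}(p+2j)^{-1/(p+2j-1)}\ \ge\ l(D_{2p})=2^{-1/2}\,p^{-(p+1)/(2p)}
\]
for every odd prime $p$ and every $k\le\frac{p+1}{2}$. Taking logarithms, splitting off the $j=0$ term, and bounding the remaining $k-1\le\frac{p-1}{2}$ terms by $\frac{\log(p+2)}{p+1}$ (using that $x\mapsto\frac{\log x}{x-1}$ is decreasing) reduces this to a single-variable inequality in $p$, which I would verify directly for $p=3$ and, for $p\ge 5$, via the elementary estimate $\log(p+2)\le\log p+\frac{2}{p}$ together with $p^2-4p-1>0$.

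\emph{Main obstacle.} The delicate step is this last numerical inequality: the crude estimate $\sum_i\frac{\log p_i}{p_i-1}\le k\,\frac{\log p}{p-1}$ already fails for $p=3$, so one genuinely has to exploit the spacing $p_i\ge p+2(i-1)$, and the resulting one-variable inequality must be handled with some care near $p=3$. Everything else is a routine induction built on Theorem \ref{main}.
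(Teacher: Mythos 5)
Your argument is correct and rests on the same two pillars as the paper's proof: Theorem \ref{main} for the implication ``$l(G)>l(D_{2p})\Rightarrow G$ cyclic'', and a Sylow-by-Sylow lower bound on $l$ of a cyclic group for the converse. The differences are organizational rather than conceptual. For the first assertion the paper does not induct: since $q\mapsto l(D_{2q})$ is decreasing, the hypothesis gives $l(G)>l(D_{2q})$ for \emph{every} prime $q$ dividing $|G|$ simultaneously, so Theorem \ref{main} makes every Sylow subgroup normal and cyclic in one stroke; your induction, peeling off one prime at a time, is an equivalent but slightly longer route (your step $l(O_{p'}(G))>l(G)$, justified by $l(O_p(G))<1$, is Lemma \ref{lem:limit}). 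For the converse the paper simply invokes its Lemma \ref{cyclic.few.divisors}, whose proof bounds every factor below by $p^{-1/(p-1)}$, obtains $l(G)>p^{-r/(p-1)}$, verifies $p^{-r/(p-1)}>l(D_{2p})$ for $p\geq 5$, and treats $p=3$ (where, as you correctly observe, this crude bound fails) as a separate two-prime computation $3^{-1/2}\cdot 5^{-1/4}>l(D_6)$. Your refinement via the spacing $p_i\geq p+2(i-1)$ of odd primes handles all $p$ more uniformly at the price of a more delicate one-variable inequality; I checked that your proposed reduction does close (with $\log(p+2)\leq\log p+2/p$ the coefficient of $\log p$ becomes $\frac{p^2-4p-1}{2p(p^2-1)}>0$ for $p\geq 5$, and $\frac{\log 2}{2}>\frac{p-1}{p(p+1)}$ absorbs the remainder), and your direct check at $p=3$, $k=2$ coincides with the paper's. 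Both routes are sound; the paper's is shorter only because the numerical lemma was already proved separately.
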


In general, if $p\geq 3$ is the smallest prime divisor of the order of a group $G$, but there are more than $\frac{p+1}{2}$ distinct primes dividing  $|G|$, then the property $l(G)>l(D_{2p})$ is not a necessary condition for $G$ to be cyclic. For instance, the cyclic group of order $315 = 3^2*5*7 $ satisfies $l(C_{315}) \approx 0.337 < l(D_6)$.

\section{Properties of the functions  $\psi''(G)$ and $l(G)$}
In this section we analyse some properties of the functions   $\psi''(G)$ and $l(G)$.
When $G$ is a nontrivial group, then the following is clear.

\begin{lemma}\label{lem:limit}
For any nontrivial finite group $G$ we have $0 < l(G) \leq \psi''(G) < 1$.
\end{lemma}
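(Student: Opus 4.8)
The statement to prove is \cref{lem:limit}: for any nontrivial finite group $G$, we have $0 < l(G) \leq \psi''(G) < 1$.

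\medskip

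The plan is to unwind the definitions and bound each of the three inequalities separately, using the interpretation of $\psi''(G)\cdot|G|$ and $l(G)\cdot|G|$ as the arithmetic and geometric means of the element orders. Write $n=|G|$ and let $o_1,\dots,o_n$ be the orders of the $n$ elements of $G$ (listed with multiplicity); each $o_i$ is a positive integer with $1\le o_i\le n$, and $o_i=1$ for the identity. By definition $\psi(G)=\sum_i o_i$ and $\rho(G)=\prod_i o_i$, so $\psi''(G)=\frac{1}{n}\cdot\frac{1}{n}\sum_i o_i$ is $\frac1n$ times the arithmetic mean $A$ of the $o_i$, and $l(G)=\frac{1}{n}\bigl(\prod_i o_i\bigr)^{1/n}$ is $\frac1n$ times the geometric mean $H$ of the $o_i$.

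\medskip

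First I would establish $l(G)\le\psi''(G)$: this is immediate from the AM--GM inequality applied to the positive reals $o_1,\dots,o_n$, which gives $H\le A$ and hence $l(G)=\frac Hn\le\frac An=\psi''(G)$. Next, $l(G)>0$ is clear since every $o_i\ge 1$, so $\rho(G)\ge 1$ and $l(G)=\rho(G)^{1/n}/n>0$. The one inequality requiring a genuine (if easy) argument is the strict upper bound $\psi''(G)<1$, equivalently $\psi(G)<n^2$, equivalently $\sum_i o_i< n^2$. Since each $o_i\le n$ we get $\sum_i o_i\le n\cdot n=n^2$, and strictness follows from the fact that the identity contributes $o_i=1<n$ (here we use that $G$ is nontrivial, so $n\ge 2$ and $1<n$); thus $\sum_i o_i\le 1+(n-1)n<n^2$. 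Chaining these gives $0<l(G)\le\psi''(G)<1$.

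\medskip

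I do not expect any real obstacle here — the lemma is elementary and the only subtlety is remembering to invoke nontriviality to make the last inequality strict (for the trivial group one would have $l=\psi''=1$). A proof of two or three lines suffices; one could alternatively phrase the whole thing by noting $1\le o_i\le n$ with at least one strict inequality on the left, so that the arithmetic and geometric means both lie strictly between $1$ and $n$, and then dividing by $n$.
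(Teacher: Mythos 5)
Your proof is correct; the paper actually states this lemma without proof (introducing it with ``the following is clear''), and your argument --- AM--GM for $l(G)\le\psi''(G)$, positivity of the orders for $l(G)>0$, and the identity element forcing $\psi(G)\le 1+(n-1)n<n^2$ --- is exactly the elementary verification the authors are taking for granted. The only cosmetic quibble is your use of $H$ for the geometric mean, which usually denotes the harmonic mean.
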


\begin{lemma}\label{lem:strict.quotient}
Let $G$ be a finite group, let $N$ be a non-trivial proper normal subgroup of $G$ and let $f \in \{\psi'',l\}$. 
Then $$f(G) < f(G/N).$$
\end{lemma}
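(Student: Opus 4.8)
The plan is to treat both functions at once by reducing each inequality to a single elementary estimate about how element orders change under the quotient map $\pi\colon G\to G/N$. The key observation is: for every $x\in G$, setting $d=o(xN)$ (the order of $xN$ in $G/N$), we have $x^d\in N$, hence $(x^d)^{|N|}=1$, so $o(x)\mid d\,|N|$ and in particular $o(x)\le |N|\cdot o(xN)$. I would record this as the starting point.

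Next I would partition $G$ into its $|G/N|=|G|/|N|$ cosets of $N$, each of cardinality $|N|$. For a coset $C=xN$ whose image has order $d$ in $G/N$, summing the estimate over $C$ gives $\sum_{y\in C}o(y)\le |N|\cdot(|N|d)=|N|^2\,o(xN)$, and multiplying it over $C$ gives $\prod_{y\in C}o(y)\le (|N|d)^{|N|}=|N|^{|N|}\,o(xN)^{|N|}$. Summing the first inequality over all cosets yields $\psi(G)\le |N|^2\,\psi(G/N)$, and multiplying the second over all cosets yields $\rho(G)\le |N|^{|G|}\,\rho(G/N)^{|N|}$. Dividing by $|G|^2$ in the first case, and taking $|G|$-th roots then dividing by $|G|$ in the second, and using $|G|/|N|=|G/N|$, these translate exactly into $\psi''(G)\le \psi''(G/N)$ and $l(G)\le l(G/N)$.

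It remains to upgrade $\le$ to $<$. Since $N$ is non-trivial, I would single out the coset $C=N$ itself, whose image in $G/N$ is the identity (order $1$). The identity element $1\in N$ has order $1<|N|$, so the coset-wise bounds are strict on this coset: $\psi(N)\le 1+(|N|-1)|N|<|N|^2$ and $\rho(N)\le |N|^{|N|-1}<|N|^{|N|}$. Because at least one coset contributes a strictly smaller quantity, the aggregated inequalities $\psi(G)<|N|^2\,\psi(G/N)$ and $\rho(G)<|N|^{|G|}\,\rho(G/N)^{|N|}$ are strict, and hence so are $\psi''(G)<\psi''(G/N)$ and $l(G)<l(G/N)$.

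I do not expect a genuine obstacle here: the only mildly delicate points are keeping the bookkeeping with the factors $|N|$ and $|G/N|$ straight when passing between $\psi,\rho$ and their normalised versions $\psi'',l$, and making sure a source of strictness is actually available — which it is, precisely because $N\neq 1$ forces the identity coset to violate equality in the estimate $o(x)\le |N|\cdot o(xN)$.
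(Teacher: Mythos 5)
Your argument is correct and, for $f=l$, it is essentially the paper's own proof: the same partition of $G$ into cosets of $N$, the same divisibility bound $o(xn)\mid o(xN)\cdot|N|$, and the same source of strictness coming from the identity element of the coset $N$ itself (which is where $N\neq 1$ is used). The only difference is that you run the identical estimate additively to handle $\psi''$ as well, making that half self-contained, whereas the paper simply cites the literature for the $\psi''$ case.
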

\begin{proof}
If $f = \psi''$, this is \cite[property (1)]{Tarna1} for the inequality and \cite[Lemma 3.2]{BAKJ} for the strict one. Suppose $f=l$.
Let $a_1, \dots, a_m \in G$ be such that 
$G= a_1N \cup a_2N \cup \dots \cup a_mN$ and
$a_iN \neq a_jN$ whenever $i\neq j$. 
So 
\[
\rho(G) = \prod_{i =1}^m\prod_{n \in N} o(a_in),
\]
where $o(a_in)$ denotes the order of the element $a_in$.
For every $1 \leq i \leq m$, let $t_i$ be the smallest positive integer such that $a_i^{t_i} \in N$. Then for every $n\in N$ we have $(a_in)^{t_i} \in N$ and so $o(a_in)$ divides $t_i|N|$. Also, we can assume that $a_1=1$ and so $t_1 = 1$ and for $n=1$ we get $o(a_1n) = 1 < 1\cdot |N|$, as $N$ is non-trivial. Thus there is at least one couple $(a_i,n)$ with $o(a_in) < t_i|N|$. Therefore 
 $$ \rho(G) < (t_1|N|)^{|N|} \cdot (t_2|N|)^{|N|} \cdots (t_m|N|)^{|N|} = \rho(G/N)^{|N|}|N|^{m|N|} = \rho(G/N)^{|N|}|N|^{|G|}.$$
Using the definition of $l(G)$, we get 
\[ l(G) = \frac{\rho(G)^{1/|G|}}{|G|} < \frac{\rho(G/N)^{|N|/|G|}|N|}{|G|} =l(G/N).\]
\end{proof}

\begin{lemma}\label{prop} Let $G$ be a finite group and let $f \in \{\psi'',l\}$. Then
\begin{enumerate}
\item if $G$ is a cyclic group and $K$ is a proper subgroup of $G$, then $f(G) < f(K)$;
\item if $H$ is a finite group and $(|G|,|H|)=1$ then $f(G \times H) = f(G)\cdot f(H)$;
\item if $f(G) > \alpha$ for some positive real number $\alpha$, then there exists and element $x\in G$ such that $[ G \colon \langle x \rangle] < \alpha^{-1}$.
\end{enumerate}
\end{lemma}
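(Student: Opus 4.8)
The three items are elementary and essentially independent, so the plan is to dispatch each one by reducing it to a fact already in the excerpt. For \emph{(1)}, I would use that in a cyclic group $G$ there is a unique subgroup of each order dividing $|G|$, and every quotient of order $d$ is cyclic of order $d$. If $K$ is trivial then $f(K)=1$ and the inequality is immediate from \Cref{lem:limit} (here $G$ is nontrivial, since it has a proper subgroup). If $K$ is nontrivial, I would take $N$ to be the unique subgroup of $G$ of order $[G:K]$: it is nontrivial because $[G:K]>1$, and proper because $|N|=[G:K]<|G|$ (as $|K|>1$). Then \Cref{lem:strict.quotient} gives $f(G)<f(G/N)$, and $G/N$ is cyclic of order $|G|/[G:K]=|K|$, hence isomorphic to $K$. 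The only point needing care is exactly this trivial/nontrivial split, forced by the hypothesis of \Cref{lem:strict.quotient} that $N$ be nontrivial and proper.

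For \emph{(2)}, the key observation is that when $(|G|,|H|)=1$ every element $(g,h)\in G\times H$ has $o((g,h))=\mathrm{lcm}(o(g),o(h))=o(g)o(h)$. From this, $\psi(G\times H)=\sum_{g,h}o(g)o(h)=\psi(G)\psi(H)$ and $\rho(G\times H)=\prod_{g,h}o(g)o(h)=\rho(G)^{|H|}\rho(H)^{|G|}$ (these are also the facts recalled in the introduction, citing \cite{DMN,AJI}). Dividing the first identity by $|G\times H|^2=|G|^2|H|^2$ yields $\psi''(G\times H)=\psi''(G)\psi''(H)$; raising the second to the power $1/(|G||H|)$ and dividing by $|G||H|$ yields $\rho(G\times H)^{1/|G\times H|}/|G\times H|=(\rho(G)^{1/|G|}/|G|)(\rho(H)^{1/|H|}/|H|)$, i.e.\ $l(G\times H)=l(G)l(H)$. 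This is a routine manipulation of exponents.

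For \emph{(3)}, the idea is that the arithmetic (hence also the geometric) mean of the element orders of $G$ is bounded above by the maximal element order $m$ of $G$. Concretely, fixing $x\in G$ with $o(x)=m$, we have $[G:\langle x\rangle]=|G|/m$, and since every term in $\psi(G)=\sum_{g}o(g)$ is at most $m$, we get $\psi''(G)=\psi(G)/|G|^2\le m/|G|=[G:\langle x\rangle]^{-1}$; combined with $l(G)\le\psi''(G)$ from \Cref{lem:limit}, this gives $f(G)\le[G:\langle x\rangle]^{-1}$ for $f\in\{\psi'',l\}$. Thus $f(G)>\alpha$ forces $[G:\langle x\rangle]^{-1}>\alpha$, i.e.\ $[G:\langle x\rangle]<\alpha^{-1}$. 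There is no real obstacle here; the whole content of the lemma is packaging these three observations, with the mild subtleties being the trivial-subgroup case in (1) and stating the coprime-order computation cleanly in (2).
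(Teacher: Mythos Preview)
Your proposal is correct and follows essentially the same route as the paper. For (1) the paper uses the identical quotient argument via \Cref{lem:strict.quotient} (without separating out the case $K=1$, which you handle more carefully); for (2) and (3) the paper simply cites the literature for $\psi''$ and then reduces $l$ to $\psi''$ via $l(G)\le\psi''(G)$, whereas you supply the short direct computations that those citations stand in for.
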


\begin{proof}
To prove (1), let $t$ be the index of $K$ in $G$. As $G$ is cyclic, there exists a subgroup $H$ of $G$ of order $t > 1$. Then $K$ is isomorphic to $G/H$ and $f(G) < f(G/H)=f(K)$ by Lemma \ref{lem:strict.quotient}.
Statement (2) is  \cite[Lemma 1.2(2)]{Tarna1} and  \cite[Lemma 2.7]{AK}. Finally, part (3) for $\psi''$ is \cite[Lemma 2.1]{AKpnil}. Now, if $l(G) > \alpha$ then $ \psi''(G) \geq l(G) > \alpha$ and the result follows.  
\end{proof}

If $p$ is a prime and $P$ is a finite cyclic $p$-group, then we can compute the exact values of $\psi''(P)$ and $l(P)$:

\begin{lemma}\label{cyclic.group}
Let $p$ be a prime and let $P$ be a non-trivial cyclic $p$-group of order $p^n$. Then 
\[\psi''(P) =  \frac{p^{2n+1} + 1}{p^{2n +1}+  p^{2n}} \quad \text{ and } \quad l(P) = p^{-\frac{p^n - 1}{p^n(p-1)}}.\]
In particular $p^{-\frac{1}{p-1}} \leq l(P) \leq p^{-\frac{1}{p}}$ and, if $p$ is odd, then $l(P) > p^{-\frac{p+1}{2p}} > l(D_{2p})$.
\end{lemma}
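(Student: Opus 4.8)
The plan is to compute both quantities directly from the definitions by summing, respectively multiplying, element orders over the cyclic $p$-group $P$ of order $p^n$. For each $0 \le k \le n$, the number of elements of $P$ of order exactly $p^k$ is $\varphi(p^k)$, where $\varphi$ is Euler's totient: so there is $1$ element of order $1$ and $p^k - p^{k-1}$ elements of order $p^k$ for $1 \le k \le n$. For $\psi''(P)$ I would evaluate $\psi(P) = \sum_{k=0}^{n} \varphi(p^k)\, p^k = 1 + \sum_{k=1}^n (p^k - p^{k-1}) p^k$; this is a geometric-type sum that telescopes after expanding $(p^k - p^{k-1})p^k = p^{2k} - p^{2k-1} = p^{2k-1}(p-1)$, giving $\psi(P) = 1 + (p-1)\sum_{k=1}^n p^{2k-1} = 1 + (p-1)\cdot p \cdot \frac{p^{2n}-1}{p^2-1} = \frac{p^{2n+1}+1}{p+1}$. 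Dividing by $|P|^2 = p^{2n}$ yields the stated formula for $\psi''(P)$ after clearing denominators.

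For $l(P)$ I would instead compute $\rho(P) = \prod_{k=0}^{n} (p^k)^{\varphi(p^k)} = p^{E}$ where $E = \sum_{k=1}^n k\,\varphi(p^k) = \sum_{k=1}^n k(p^k - p^{k-1})$. A standard Abel summation / telescoping argument gives $E = n p^n - \sum_{k=1}^{n-1} p^k \cdot (\text{something})$; more cleanly, writing $\sum_{k=1}^n k p^k - \sum_{k=1}^n k p^{k-1} = \sum_{k=1}^n k p^k - \sum_{j=0}^{n-1}(j+1)p^{j}$ and collecting terms leaves $E = n p^n - \sum_{k=0}^{n-1} p^k = n p^n - \frac{p^n - 1}{p-1}$. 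Then $\rho(P)^{1/|P|} = p^{E/p^n}$ and dividing by $|P| = p^n$ replaces the exponent $E/p^n$ by $E/p^n - n = -\frac{p^n-1}{p^n(p-1)}$, which is exactly the claimed exponent. These are the only real computations; neither presents a genuine obstacle, just careful bookkeeping of geometric sums.

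It remains to prove the three inequalities in the ``In particular'' clause. Set $g(x) = \frac{x-1}{x(p-1)}$ for $x = p^n \ge p$; this is increasing in $x$, with infimum value $\frac{1}{p-1}$ at $x \to 1^+$ (attained in the limit $n \to 0$, or bounded below using $n \ge 1$) and supremum $\frac{1}{p-1}$... — more precisely $g(p^n) \in \big(\tfrac{1}{p},\, \tfrac{1}{p-1}\big)$ for $n \ge 1$, since $\frac{p^n-1}{p^n} < 1$ gives $g(p^n) < \frac{1}{p-1}$, and $\frac{p^n - 1}{p^n(p-1)} \ge \frac{p-1}{p(p-1)} = \frac1p$ for $n \ge 1$; as $t \mapsto p^{-t}$ is decreasing this gives $p^{-1/(p-1)} \le l(P) \le p^{-1/p}$. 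For the last chain, with $p$ odd: the bound $l(P) > p^{-\frac{p+1}{2p}}$ follows once $g(p^n) < \frac{p+1}{2p}$, i.e. $\frac{p^n - 1}{p^n(p-1)} < \frac{p+1}{2p}$, equivalently $2p(p^n-1) < (p+1)(p-1)p^n = (p^2-1)p^n$, i.e. $2p\cdot p^n - 2p < (p^2-1)p^n$; since $p \ge 3$ we have $p^2 - 1 \ge 2p$ (indeed $p^2 - 2p - 1 = (p-1)^2 - 2 \ge 2 > 0$), so $(p^2-1)p^n \ge 2p\cdot p^n > 2p\cdot p^n - 2p$, giving the inequality. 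Finally $p^{-\frac{p+1}{2p}} > l(D_{2p}) = 2^{-1/2} p^{-\frac{p+1}{2p}}$ is immediate since $2^{-1/2} < 1$. The only place to be slightly careful is the lower bound $l(P) \ge p^{-1/(p-1)}$: for $n=1$ one has equality, and for $n \ge 2$ strict inequality, so I would state it as the non-strict bound as written.
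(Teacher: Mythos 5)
Your computations are correct and complete. For the two formulas the paper simply cites earlier work (\cite[Lemma 2.9(1)]{HML1} and \cite[Lemma 2.10]{AK}), so your direct derivation via $\psi(P)=\sum_k \varphi(p^k)p^k$ and $\rho(P)=\prod_k (p^k)^{\varphi(p^k)}$ is more self-contained but proves the same thing; both sums check out, giving $\psi(P)=\frac{p^{2n+1}+1}{p+1}$ and the exponent $np^n-\frac{p^n-1}{p-1}$. For the ``in particular'' clause the paper argues via monotonicity in $n$ (value $p^{-1/p}$ at $n=1$, limit $p^{-1/(p-1)}$ as $n\to\infty$) and then uses $2p<p^2-1$; your direct bounding of the exponent $\frac{p^n-1}{p^n(p-1)}$ is the same argument in slightly different clothing. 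One small factual slip in your closing remark: equality in $p^{-\frac{1}{p-1}}\leq l(P)\leq p^{-\frac{1}{p}}$ is attained at $n=1$ in the \emph{upper} bound (since $g(p)=\frac{1}{p}$ exactly), whereas the lower bound is strict for every finite $n$ and is only approached as $n\to\infty$ — you have this reversed. This does not affect the validity of the non-strict inequalities you prove, and indeed your own inequality $\frac{p^n-1}{p^n}<1$ already gives the lower bound strictly, which is what the final chain $l(P)\geq p^{-\frac{1}{p-1}}>p^{-\frac{p+1}{2p}}>l(D_{2p})$ needs.
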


\begin{proof}
The formulas for $\psi''(P) $ and $l(P)$ follow from \cite[Lemma 2.9(1)]{HML1} and \cite[Lemma 2.10]{AK}. As for the last statement, note that, if we fix the prime $p$, then the function $p^{-\frac{p^n - 1}{p^n(p-1)}}$ is decreasing in $n$. Moreover, if $n=1$ then $l(P) =  p^{-\frac{1}{p}}$ and $\lim_{n\rightarrow +\infty}p^{-\frac{p^n - 1}{p^n(p-1)}} = p^{-\frac{1}{p-1}}$. Hence for every cyclic $p$-group $P$ we have $ p^{-\frac{1}{p-1}} \leq  l(P) \leq p^{-\frac{1}{p}}$. Finally, if $p$ is odd, then $2p < p^2-1$ and we deduce that 
$$l(P) \geq p^{-\frac{1}{p-1}} > p^{-\frac{p+1}{2p}} > 2^{-\frac{1}{2}}p^{-\frac{p+1}{2p}} = l(D_{2p}).$$
\end{proof}


Going further, we obtain a similar lower bound for $l(G)$ when $G$ is a cyclic group whose order is odd and divisible by a bounded number of primes. 

\begin{lemma}\label{cyclic.few.divisors}
Let $G$ be a finite cyclic group of odd order and suppose $p\geq 3$ is the smallest prime dividing $|G|$. If the number of prime divisors of $|G|$ is at most $\frac{p+1}{2}$ then $l(G) > l(D_{2p})$.
\end{lemma}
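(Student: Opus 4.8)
The plan is to use the multiplicativity of $l$ on coprime direct factors (Lemma~\ref{prop}(2)) together with the cyclic-$q$-group estimate of Lemma~\ref{cyclic.group} to reduce the claim to a single elementary inequality in the prime $p$, which I would then verify by a monotonicity argument, handling the smallest case $p=3$ separately.

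First I would write $|G|=q_1^{a_1}\cdots q_k^{a_k}$ with $p=q_1<q_2<\cdots<q_k$ the distinct primes dividing $|G|$, so that $k\le \frac{p+1}{2}$. Since $G$ is cyclic it is the direct product of its Sylow subgroups $P_i\cong C_{q_i^{a_i}}$, which have pairwise coprime orders, so Lemma~\ref{prop}(2) gives $l(G)=\prod_{i=1}^{k}l(P_i)$. By Lemma~\ref{cyclic.group} each $P_i$ satisfies $l(P_i)>q_i^{-1/(q_i-1)}$, the inequality being strict since $\tfrac{q_i^{a_i}-1}{q_i^{a_i}(q_i-1)}<\tfrac{1}{q_i-1}$; moreover $q\mapsto q^{-1/(q-1)}$ is increasing for $q\ge 2$ (equivalently $q\mapsto \tfrac{\ln q}{q-1}$ is decreasing there, a one-line derivative check), and every $q_i\ge p$, so $l(P_i)>p^{-1/(p-1)}$ for all $i$. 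Using $k\le\frac{p+1}{2}$ this yields
\[
 l(G) > p^{-k/(p-1)} \ge p^{-\frac{p+1}{2(p-1)}}.
\]

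It would then suffice to show $p^{-\frac{p+1}{2(p-1)}}\ge l(D_{2p})=2^{-1/2}\,p^{-\frac{p+1}{2p}}$, which, after dividing by $p^{-\frac{p+1}{2p}}$ and taking logarithms, is equivalent to
\[
 \frac{(p+1)\ln p}{p(p-1)} \le \ln 2 .
\]
I would observe that the left-hand side equals $\frac{\ln p}{p-1}+\frac{\ln p}{p(p-1)}$, a sum of two functions each decreasing for $p\ge 3$, so over the primes $p\ge 5$ it is largest at $p=5$, where it equals $\frac{3\ln 5}{10}<\ln 2$ because $5^{3}=125<1024=2^{10}$. This settles every prime $p\ge 5$.

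Finally, for $p=3$ the displayed inequality fails ($9>8$), so the chain above breaks down and I would argue directly: here $k\le 2$; if $k=1$ then $l(G)=l(P_1)>l(D_6)$ already by Lemma~\ref{cyclic.group}, and if $k=2$ then $q_2\ge 5$ and, exactly as above, $l(G)>3^{-1/2}\cdot 5^{-1/4}$, where raising the inequality $3^{-1/2}5^{-1/4}\ge 2^{-1/2}3^{-2/3}=l(D_6)$ to the twelfth power reduces it to $576\ge 125$. The main obstacle, such as it is, is precisely that the key inequality $\tfrac{(p+1)\ln p}{p(p-1)}\le\ln 2$ is false at the minimal admissible prime $p=3$, which is why that case needs separate (and easy) handling; for all larger primes the bound holds with room to spare.
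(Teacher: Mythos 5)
Your proposal is correct and follows essentially the same route as the paper: decompose the cyclic group into its Sylow subgroups, use multiplicativity of $l$ and the bound $l(P_i)>q_i^{-1/(q_i-1)}$ together with the monotonicity of $q\mapsto q^{-1/(q-1)}$ to get $l(G)>p^{-k/(p-1)}\ge p^{-\frac{p+1}{2(p-1)}}$, reduce to the single inequality in $p$ (the paper writes it as $p^{\frac{-p-1}{2p(p-1)}}>2^{-1/2}$ checked at $p=5$, which is your $\frac{(p+1)\ln p}{p(p-1)}\le\ln 2$, i.e.\ $125<1024$), and handle $p=3$ separately via the same bound $3^{-1/2}\cdot 5^{-1/4}>l(D_6)$. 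Your exact integer verifications in place of the paper's decimal approximations are a minor stylistic improvement, not a different argument.
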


\begin{proof}
let $p=p_1 < p_2 < \dots < p_r$ be the distinct prime divisors of $|G|$ and let $P_i$ be a Sylow $p_i$-subgroup of $G$ for every $1\leq i \leq r$. Then by Lemma \ref{prop}(2) we have
\[l(G) = \prod_{i=1}^{r}l(P_i). \]
Now, for every $i \geq 2$ we have $l(P_i) > p_i^{-\frac{1}{p_i-1}}$ by Lemma \ref{cyclic.group}. 
Since the function $x^{-\frac{1}{x-1}}$ is increasing for $x> 1$, for every $i \geq 2$ we deduce $l(P_i) > p^{-\frac{1}{p-1}}$. Hence
\[l(G) = \prod_{i=1}^{r}l(P_i) > p^{-\frac{r}{p-1}} \]
and  $p^{-\frac{r}{p-1}} > l(D_{2p}) = 2^{-\frac{1}{2}} \cdot  p^{-\frac{p+1}{2p}}$ if and only if $p^{\frac{p^2 -2rp -1}{2p(p-1)}} > 2^{-\frac{1}{2}}. $

Suppose $r\leq \frac{p+1}{2}$. Then 
\[ p^{\displaystyle \frac{p^2 -2rp -1}{2p(p-1)}} > p^{\displaystyle \frac{-p -1}{2p(p-1)}}.\]
Now, $p^{\frac{-p -1}{2p(p-1)}}$ is an increasing function of $p$ and if $p=5$ then  $p^{\frac{-p -1}{2p(p-1)}} = 5^{\frac{-6}{40}} >  2^{-\frac{1}{2}}.$ This proves that for $p\geq 5$ we have $l(G) > l(D_{2p})$. Finally, suppose $p=3$ and $|G|$ is divided by two primes, say $3$ and $q$. Then
\[ l(G) \geq 3^{-\frac{1}{2}} \cdot q^{-\frac{1}{q-1}} \geq 3^{-\frac{1}{2}} \cdot 5^{-\frac{1}{4}} \approx 0.386 > 0.340 \approx l(D_{6}). \]
Hence the statement is also true for $p=3$.
\end{proof}

We conclude the section with some structural results involving the functions $\psi''$ and $l$.

\begin{lemma}\label{cyclic.sylow}
Let $G$ be a finite group, let $p$ be a prime and let $f \in \{\psi'',l\}$. If $f(G) > p^{-1}$ then $G$ has a unique (normal) Sylow $p$-subgroup $P$ and $P$ is cyclic.
\end{lemma}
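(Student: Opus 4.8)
The plan is to exploit the multiplicativity of $l$ (and the analogous property of $\psi''$) on coprime direct factors together with the bounds on $l$ of cyclic $p$-groups established in Lemma \ref{cyclic.group}. First I would invoke Lemma \ref{prop}(3): since $f(G) \ge l(G)$ when $f=l$, in either case $f(G) > p^{-1}$ gives an element $x \in G$ with $[G:\langle x\rangle] < p$. In particular a Sylow $p$-subgroup $P$ of $G$ is contained in $\langle x\rangle$ up to conjugacy — more precisely, since $[G:\langle x\rangle] < p$, the index $[G:\langle x\rangle]$ is coprime to $p$, so $\langle x\rangle$ contains a full Sylow $p$-subgroup of $G$. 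As $\langle x\rangle$ is cyclic, its Sylow $p$-subgroup is cyclic, hence $P$ is cyclic.

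Next I would establish normality and uniqueness of $P$. Suppose for contradiction that $P$ is not normal, so $G$ has more than one Sylow $p$-subgroup. I would pass to the quotient $G/N$ where $N = O_{p'}(G)$ (or argue directly): by Lemma \ref{lem:strict.quotient}, $f(G/N) \ge f(G) > p^{-1}$, and $G/N$ still has a non-normal cyclic Sylow $p$-subgroup. Then Burnside's normal $p$-complement theorem does not immediately apply, so instead I would bound $f$ from above. The key point is that a group with a non-normal Sylow $p$-subgroup contains "many" $p'$-elements relative to $p$-elements, forcing the arithmetic (resp. geometric) mean of element orders, divided by $|G|$, to drop below $p^{-1}$. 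Concretely, one can use that $f(G) < f(G/N)$ to reduce to the case $O_{p'}(G) = 1$; then a non-normal Sylow $p$-subgroup together with the structure of $G$ (for instance via Lemma \ref{prop}(1), comparing $G$ to a cyclic group of the same order whenever $G$ were cyclic — which it is not here) yields the contradiction.

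The cleanest route, and the one I would actually pursue, avoids case analysis: having shown $P \le \langle x\rangle$ with $[G:\langle x\rangle] < p$, let $q$ be any prime dividing $[G:\langle x\rangle]$; then $q < p$, but this does not by itself bound the number of primes. Instead I would use Lemma \ref{prop}(3) more carefully together with the observation that $\langle x \rangle$ is a cyclic subgroup of index $< p$; by a counting/transfer argument, $G$ has a normal $p$-complement precisely when such a small-index cyclic subgroup exists and $p$ is the relevant prime — this is essentially the content that a cyclic Sylow $p$-subgroup sitting inside a subgroup of index prime to $p$ and less than $p$ must be normal, because $N_G(P)/C_G(P)$ embeds in $\Aut(P)$, and combined with the index constraint one forces $P \norm G$. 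I would then conclude $P = O_p(G)$ is the unique Sylow $p$-subgroup.

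The main obstacle I anticipate is the passage from "a cyclic subgroup of index less than $p$" to "normal cyclic Sylow $p$-subgroup": the index bound $[G:\langle x\rangle] < p$ is strong, but translating it into normality of $P$ requires either a transfer-theoretic input (Burnside-type argument, using that $P$ is cyclic and hence abelian so that $N_G(P) = C_G(P)$ would give a normal complement) or a direct combinatorial argument on cosets. I expect the authors to cite or reprove a short lemma to the effect that a finite group possessing an abelian (here cyclic) subgroup of index smaller than its smallest prime divisor, or of index prime to $p$ and less than $p$, has a normal Sylow $p$-subgroup; getting the hypotheses of such a lemma to line up exactly with $f(G) > p^{-1}$ is the delicate part.
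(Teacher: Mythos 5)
Your first half matches the paper exactly: Lemma \ref{prop}(3) gives $x\in G$ with $[G:\langle x\rangle]<p$, so the index is coprime to $p$, a Sylow $p$-subgroup $P$ of $\langle x\rangle$ is a Sylow $p$-subgroup of $G$, and it is cyclic. That part is fine.

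The gap is in the normality step, which you flag yourself as ``the delicate part'' and never actually close. None of the three routes you sketch is the right one, and two of them would not work as stated: passing to $G/O_{p'}(G)$ does not obviously remove a non-normal Sylow $p$-subgroup, and the $N_G(P)/C_G(P)\hookrightarrow \Aut(P)$ embedding says nothing about whether $N_G(P)$ is all of $G$. No transfer or Burnside-type argument is needed. The paper's argument is one line of Sylow theory: $P$ is the unique (hence characteristic) Sylow $p$-subgroup of the cyclic group $\langle x\rangle$, so $\langle x\rangle\le N_G(P)$, whence the number of Sylow $p$-subgroups satisfies
\[
n_p=[G:N_G(P)]\le [G:\langle x\rangle]<p .
\]
Since $n_p\equiv 1\pmod p$ by Sylow's theorem, $n_p=1$ and $P\norm G$. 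So the statement you assert in your ``cleanest route'' is correct, but the justification you attach to it is the wrong tool; the Sylow congruence is the missing idea.
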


\begin{proof}
By Lemma \ref{prop} (3), there exists an element $x\in G$ such that
\[ [G \colon \langle x \rangle] < p \]
 and so $p$ does not divide $[G \colon \langle x \rangle]$. Let $P \in \Syl_p(\langle x \rangle)$. Then $P$ is cyclic and $P \in \Syl_p(G)$. 
Since $\langle x \rangle \leq N_G(P)$, it follows that $[G \colon N_G(P) ] < p$, which implies $[G \colon  N_G(P) ]=1$.
\end{proof}

The next lemma gives examples of finite groups $G$ satisfying $l(G) > p^{-1}$ that are not $p$-nilpotent. First we need some notation:

\begin{notation} Let $p$ be a prime. For $x \in \mathbb{R} \setminus \{0\}$ we set
\[\theta_p(x):=p^{\frac{-1+p-px}{px}}.\]
\end{notation}

We point out that the real function $\theta_p(x)$ is decreasing for every prime $p$.

\begin{lemma}\label{lem:semidirect}
Let $p$ be an odd prime and let $G$ be a finite group such that $G = PH$ with $P\in \Syl_p(G), P\cong C_p, P\norm G, C_G(P) = P$, $|H| > 1$ and $P \cap H = 1$. Then
\begin{enumerate}
\item $l(G) = \theta_p(|H|)\cdot l(H).$
\item if $|H| = q$ for some prime $q$, then  
either $q=2$ and $G \cong D_{2p}$ or $q\geq 3$ and  $p^{-1} < l(G) < l(D_{2p})$.
\end{enumerate}
\end{lemma}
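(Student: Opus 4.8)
The plan is to establish (1) by a direct computation of $\rho(G)$ organised along the cosets of $P$, and then to deduce (2) as a short calculation whose only delicate point is a single transcendental inequality.

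For (1): since the conjugation action of $G$ on $P$ has kernel $C_G(P)=P$, the restriction $H\to\Aut(P)$ is injective, so $H\cong G/P$ acts faithfully on $P\cong C_p$; in particular $G=\bigsqcup_{h\in H}hP$. The key claim is that every $y\in G\setminus P$, lying in the coset $hP$ with $h\neq1$, has $o(y)=o(h)$: since $|P|=p$ is prime, $\langle y\rangle\cap P$ is trivial or all of $P$, and the latter would force $P\le\langle y\rangle$, hence $y\in C_G(P)=P$, a contradiction; so $\langle y\rangle\cap P=1$ and $\langle y\rangle\cong\langle y\rangle P/P=\langle hP\rangle$, whence $o(y)=o(hP)=o(h)$. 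Now $P$ contributes $p^{p-1}$ to $\rho(G)$ (it has $p-1$ elements of order $p$), and for each $h\in H\setminus\{1\}$ the coset $hP$ contributes $o(h)^{p}$, so
\[
\rho(G)=p^{\,p-1}\prod_{h\in H\setminus\{1\}}o(h)^{p}=p^{\,p-1}\,\rho(H)^{p}.
\]
Raising this to the power $1/|G|=1/(p|H|)$, dividing by $|G|$, and using $\rho(H)^{1/|H|}=|H|\,l(H)$, gives $l(G)=p^{\frac{p-1}{p|H|}-1}\,l(H)$, and since $\theta_p(|H|)=p^{\frac{-1+p-p|H|}{p|H|}}=p^{\frac{p-1}{p|H|}-1}$ this is exactly (1).

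For (2): assume $|H|=q$ is prime, so $H\cong C_q$ and $l(H)=q^{-1/q}$ (e.g.\ by \cref{cyclic.group} with $n=1$); then $l(G)=\theta_p(q)\,q^{-1/q}$ by (1), and the faithful action of $H$ on $C_p$ forces $q\mid p-1$. If $q=2$, then $G$ has order $2p$ and is non-abelian (because $C_G(P)=P\subsetneq G$), hence $G\cong D_{2p}$, consistently with $l(G)=\theta_p(2)\,2^{-1/2}=l(D_{2p})$. If $q\geq3$, then $q$ is odd, and since it divides the even number $p-1$ we get $p-1\geq2q$, so $q\leq(p-1)/2$ and $p\geq2q+1$. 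For the lower bound, raising $\theta_p(q)\,q^{-1/q}>p^{-1}$ to the $q$-th power shows it is equivalent to $p^{\,1-1/p}>q$; this holds because $p^{1/p}<2$ yields $p^{\,1-1/p}>p/2>(p-1)/2\geq q$. For the upper bound $l(G)<l(D_{2p})=\theta_p(2)\,2^{-1/2}$, taking logarithms reduces the claim to
\[
\frac{p-1}{p}\Bigl(\tfrac12-\tfrac1q\Bigr)\ln p\;>\;\frac{\ln2}{2}-\frac{\ln q}{q}.
\]
For $q=3$ the right-hand side equals $\tfrac{\ln2}{2}-\tfrac{\ln3}{3}<0$ while the left-hand side is positive, so it holds; for $q\geq5$ we have $\tfrac12-\tfrac1q\geq\tfrac3{10}$ and $p\geq11$, so the left-hand side is at least $\tfrac{10}{11}(\ln11)\cdot\tfrac3{10}>\tfrac{\ln2}{2}$, which already exceeds the right-hand side.

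The main obstacle is the upper bound in (2): it is a genuine transcendental inequality that does not follow from the monotonicity of $\theta_p$ alone, since $q^{-1/q}>2^{-1/2}$ once $q\geq5$. What rescues it is the elementary observation that an odd prime $q$ dividing the even number $p-1$ satisfies $p\geq2q+1$, making the factor $\ln p$ large enough, together with a separate treatment of the borderline case $q=3$. Part (1), the lower bound, and the identification $G\cong D_{2p}$ for $q=2$ are routine.
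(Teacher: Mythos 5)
Your proof is correct, and its overall strategy coincides with the paper's: obtain the product formula for $\rho(G)$ from the coset decomposition along $P$, and for part (2) exploit that the faithful action of $H$ on $P\cong C_p$ forces $q\mid p-1$, hence $p\geq 2q+1$ for odd $q$ --- this is exactly the arithmetic input the paper uses (in the form $p>2q$) to make both inequalities work. Two points of execution differ. First, you prove $\rho(G)=p^{p-1}\rho(H)^p$ from scratch, via the observation that $\langle y\rangle\cap P=1$ for every $y\notin P$ (which correctly uses both $C_G(P)=P$ and $|P|=p$ prime), whereas the paper simply quotes \cite[Lemma 2.6]{AK}; your argument is a clean self-contained substitute. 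Second, for the upper bound $l(G)<l(D_{2p})$ the paper argues by contradiction, clears denominators to reach the integer inequality $2^{pq}\geq q^{2p}p^{(p-1)(q-2)}$, and refutes it uniformly from $p>2q$, while you take logarithms and split into $q=3$ and $q\geq 5$; both are valid, the paper's being case-free and yours making more transparent why $q=3$ is the borderline case (it is the only odd prime with $q^{-1/q}<2^{-1/2}$, so there the inequality holds for sign reasons alone). Your numerical estimates check out: for $q\geq5$ one has $p\geq 11$, so $\frac{p-1}{p}\bigl(\frac12-\frac1q\bigr)\ln p\geq\frac{3\ln 11}{11}>\frac{\ln 2}{2}$, which dominates the right-hand side.
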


\begin{proof} Note that $P$ is the unique Sylow $p$-subgroup of $G$ and so $(|P|,|H|)=1$.
By \cite[Lemma 2.6]{AK} we get 
\[ \rho(G) = \rho(P)^{|C_H(P)|}\cdot\rho(H)^{|P|} = p^{p-1} \cdot \rho(H)^p\]
Hence
\[ l(G) =  \frac{\rho(G)^{\frac{1}{p|H|}}}{p|H|} = \frac{(p^{p-1} \cdot \rho(H)^p)^{\frac{1}{p|H|}}}{p|H|} = \frac{p^\frac{p-1}{p|H|}}{p} \cdot \frac{\rho(H)^{\frac{1}{|H|}}}{|H|} = p^{-\frac{p|H| - p + 1}{p|H|}}\cdot l(H) = \theta_p(|H|)\cdot l(H).\]

Now suppose $|H|=q$ for some prime $q$. Hence $H\cong C_q$ and $l(H) = q^{-\frac{1}{q}}$ by Lemma \ref{cyclic.group}.
If $q=2$ then $G \cong D_{2p}.$
Now let $q \geq 3$ and, aiming for a contradiction, suppose $l(G) \geq l(D_{2p})$. By part (1) we get
\[p^{\frac{-pq +p - 1}{pq}} \cdot q^{-\frac{1}{q}} = l(G) \geq l(D_{2p}) = 2^{-\frac{1}{2}}p^{-\frac{p+1}{2p}} \]
that corresponds to
\[ 2^{pq} \geq q^{2p} p^{(p-1)(q-2)}.\]

Note that $H \cong G/P = N_G(P)/C_G(P)$ is isomorphic to a subgroup of $\Aut(C_p) \cong C_{p-1}$. Hence $q$ divides $p-1$ and from $q\geq 3$ we deduce that $p > 2q$. Thus
\[ 2^{pq} \geq q^{2p} p^{(p-1)(q-2)} > q^{2p} \cdot (2q)^{(p-1)(q-2)} = q^{2p + (p-1)(q-2)} \cdot 2^{(p-1)(q-2)}\]
and so
\[ 2^{pq - (p-1)(q-2)} > q^{2p + (p-1)(q-2)} > 2^{2p + (p-1)(q-2)}.\] 

Now this is true if and only if
\[\begin{aligned} pq - (p-1)(q-2) &> 2p + (p-1)(q-2) 
\\\Leftrightarrow pq - pq +2p +q - 2 &> 2p + pq -2p -q + 2
\\\Leftrightarrow pq &< 2q + 2p - 4 < 3p - 4 < 3p  \end{aligned}\]
where in last line we used again that $2q < p$, obtaining a contradiction. Therefore $l(G) < l(D_{2p})$. 

It remains to show that $l(G) > p^{-1}$, that is 

\[p^{\frac{-pq +p - 1}{pq}} \cdot q^{-\frac{1}{q}} > p^{-1} \Leftrightarrow p^p > p \cdot q^p.\]

Note that $2^p > p$ and $q\leq \frac{p-1}{2}$, so 
\[p^p > \frac{p}{2^p} p^p = p \cdot \left( \frac{p}{2} \right)^p >p \cdot q^p.\]
 This completes the proof.
\end{proof}

\section{Proof of the main results }
In this final section we prove our main results: Theorem \ref{main}, Theorem \ref{p=3} and Corollary \ref{cor:odd}. We start with some useful lemmas. We recall that for a finite group $G$, let $O_{p'}(G)$ denote the largest normal $p'$-subgroup of $G$ and let $\Phi(G)$ denote the Frattini subgroup of $G$.


\begin{lemma}\label{quotients}
Let $p$ be a prime and let $G$ be a finite group. If $G/\Phi(G)$ or $G/O_{p'}(G)$ is $p$-nilpotent, then $G$ is $p$-nilpotent.
\end{lemma}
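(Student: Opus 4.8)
The plan is to treat the two hypotheses separately; the one on $O_{p'}(G)$ is formal, and the one on $\Phi(G)$ carries all the content. Suppose first that $G/O_{p'}(G)$ is $p$-nilpotent, and write $N=O_{p'}(G)$. Let $K/N$ be a normal $p$-complement of $G/N$, so $K\norm G$, the quotient $K/N$ is a $p'$-group, and $G/K\cong(G/N)/(K/N)$ is a $p$-group; since $N$ and $K/N$ are $p'$-groups, so is $K$, and hence $K$ is a normal $p$-complement of $G$. The same one-line argument shows, more generally, that if $M\norm G$ is a $p'$-subgroup and $G/M$ is $p$-nilpotent then $G$ is $p$-nilpotent — a fact I reuse below.

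Now suppose $G/\Phi(G)$ is $p$-nilpotent. First I reduce to the case where $\Phi(G)$ is a $p$-group. Since the Frattini subgroup of a finite group is nilpotent, $\Phi(G)=P_0\times Q_0$ with $P_0\in\Syl_p(\Phi(G))$ and $Q_0=O_{p'}(\Phi(G))$ both characteristic in $\Phi(G)$, hence normal in $G$. Using the standard equality $\Phi(G/Q_0)=\Phi(G)/Q_0$ (valid because $Q_0\le\Phi(G)$), the group $\bar G:=G/Q_0$ has $\Phi(\bar G)\cong P_0$ a $p$-group, while $\bar G/\Phi(\bar G)\cong G/\Phi(G)$ is $p$-nilpotent. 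So once the statement is proved for groups whose Frattini subgroup is a $p$-group, $\bar G$ is $p$-nilpotent, and then $G$ is $p$-nilpotent by the remark above (with $M=Q_0$). Thus we may assume $\Phi:=\Phi(G)$ is a $p$-group.

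Assume now $\Phi$ is a $p$-group and let $H/\Phi$ be the normal $p$-complement of $G/\Phi$; then $H\norm G$, the quotient $G/H$ is a $p$-group, and since $H/\Phi$ is a $p'$-group, $\Phi$ is a normal Sylow $p$-subgroup of $H$. By the Schur--Zassenhaus theorem $H=\Phi\rtimes L$ with $L$ a Hall $p'$-subgroup of $H$, and all complements of $\Phi$ in $H$ are $H$-conjugate. The Frattini argument applied to $H\norm G$ gives $G=H\,N_G(L)=\Phi\,N_G(L)$, and since the elements of $\Phi\le\Phi(G)$ are non-generators of $G$, this forces $N_G(L)=G$, that is, $L\norm G$. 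Then $L$ is a normal $p'$-subgroup of $G$ with $|G/L|=|G/H|\cdot|\Phi|$ a power of $p$, so $L$ is a normal $p$-complement of $G$ and we are done.

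The only real difficulty is this last step. Passing to $G/R$ for a normal $p$-subgroup $R\le\Phi(G)$ keeps $p$-nilpotency of the quotient, but the preimage $H$ of a normal $p$-complement is not itself a $p'$-group: it merely has a normal Sylow $p$-subgroup, and one must extract from it a $p$-complement that is normal not just in $H$ but in all of $G$. This is exactly what the non-generator property of $\Phi(G)$ buys, via the Frattini argument. (Alternatively, the $\Phi(G)$-case follows from the general fact that the class of $p$-nilpotent groups is a saturated formation.)
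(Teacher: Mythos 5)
Your proof is correct, but for the $\Phi(G)$ half it takes a genuinely different route from the paper. The paper disposes of that case in one line by citing \cite[9.3.4]{ROB} (the standard fact that $p$-nilpotency of $G/\Phi(G)$ implies that of $G$, i.e.\ that $p$-nilpotent groups form a saturated formation — the alternative you mention in your closing parenthesis), and only writes out an argument for the $O_{p'}(G)$ case, which is essentially identical to your first paragraph: the full preimage of the normal $p$-complement of $G/O_{p'}(G)$ is a $p'$-group of $p$-power index. Your treatment of the $\Phi(G)$ case is instead self-contained: reduce to $\Phi(G)$ a $p$-group, apply Schur--Zassenhaus to the preimage $H$ of the $p$-complement (conjugacy of complements is unconditional here since $\Phi$ is a $p$-group, hence soluble), and use the Frattini argument together with the non-generator property to force $L\norm G$. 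This is all sound; the only cosmetic remark is that the reduction to $\Phi(G)$ a $p$-group is not actually needed — since $\Phi(G)$ is nilpotent, its Sylow $p$-subgroup $P_0$ is already normal in $H$ and contained in $\Phi(G)$, so you could run the Schur--Zassenhaus/Frattini step directly on $P_0\norm H$ and conclude $G=P_0\,N_G(L)=N_G(L)$ in one pass. What your version buys is transparency (it shows exactly where the non-generator property enters); what the paper's version buys is brevity by outsourcing the saturation statement to a textbook.
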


\begin{proof}
If $G/\Phi(G)$ is $p$-nilpotent, then $G$ is $p$-nilpotent by \cite[9.3.4]{ROB}. Suppose $G/O_{p'}(G)$ is $p$-nilpotent. Then there exists a normal subgroup $H$ of $G$ containing $O_{p'}(G)$ such that $H$ has $p$-power index in $G$ and $([H \colon O_{p'}(G)],p)=1$. Since $O_{p'}(G)$ has $p'$-order, we deduce that $H$ has $p'$-order and so $H$ is a normal $p$-complement for $G$, that is, $G$ is $p$-nilpotent.
\end{proof}

\begin{lemma}{\cite[9.1.1 (i), 5.2.13 (ii)]{ROB}}\label{lem:Op-Frat}
If $G$ is a finite group and $N \norm G$ then $O_{p'}(N) \leq O_{p'}(G)$ and  $\Phi(N) \leq \Phi(G)$.
\end{lemma}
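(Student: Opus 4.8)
The plan is to prove the two assertions separately, in each case using that the relevant subgroup of $N$ is characteristic in $N$ and hence normal in $G$. For the inclusion $O_{p'}(N) \leq O_{p'}(G)$, I would first observe that $O_{p'}(N)$ is characteristic in $N$, being defined intrinsically as the unique largest normal $p'$-subgroup of $N$; since $N \norm G$, a characteristic subgroup of $N$ is normal in $G$, so $O_{p'}(N) \norm G$. As $O_{p'}(N)$ is moreover a $p'$-group that is normal in $G$, it is contained in the largest such subgroup, which gives $O_{p'}(N) \leq O_{p'}(G)$. This direction is immediate.

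For $\Phi(N) \leq \Phi(G)$ I would argue by contradiction, using the description of $\Phi(G)$ as the intersection of the maximal subgroups of $G$. Again $\Phi(N)$ is characteristic in $N$, hence $\Phi(N) \norm G$. Suppose $\Phi(N) \not\leq \Phi(G)$; then some maximal subgroup $M$ of $G$ fails to contain $\Phi(N)$, so by maximality $G = M\Phi(N)$. Applying Dedekind's modular law, with $\Phi(N) \leq N$, yields $N = N \cap M\Phi(N) = (N \cap M)\Phi(N)$. Since $\Phi(N)$ consists of non-generators of the finite group $N$, the equality $N = (N \cap M)\Phi(N)$ forces $N = N \cap M$, i.e. $N \leq M$; but then $\Phi(N) \leq N \leq M$, contradicting the choice of $M$. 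Hence $\Phi(N)$ lies in every maximal subgroup of $G$, so $\Phi(N) \leq \Phi(G)$.

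I do not anticipate a real obstacle, since both parts are classical; the only point requiring care is the appeal to the non-generator property of $\Phi(N)$ — that $H\Phi(N) = N$ implies $H = N$ for a subgroup $H \leq N$ — which is standard for finite groups and is precisely where finiteness is used. For this reason one may alternatively just cite \cite[9.1.1 (i), 5.2.13 (ii)]{ROB}, as is done here.
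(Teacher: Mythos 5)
Your proof is correct. Note that the paper itself gives no argument for this lemma — it is stated purely as a citation to \cite[9.1.1 (i), 5.2.13 (ii)]{ROB} — and what you have written is the standard textbook proof of both inclusions (characteristic-in-normal-implies-normal for the first part, and the maximal-subgroup/Dedekind/non-generator argument for the second), so there is nothing to compare beyond saying your argument fills in exactly what the cited reference contains.
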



\begin{lemma}\label{p.smaller}
If $p \geq 3$ is a prime, then $l(D_{2p}) > p^{-1}$.
\end{lemma}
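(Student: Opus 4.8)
The plan is to unwind the definition $l(D_{2p}) = 2^{-1/2}\,p^{-\frac{p+1}{2p}}$ and reduce the claimed inequality to an elementary comparison between powers of $p$ and of $2$. First I would note that $l(D_{2p}) > p^{-1}$ is equivalent, after multiplying both sides by $p$ and simplifying the exponent via $1 - \frac{p+1}{2p} = \frac{p-1}{2p}$, to $p^{\frac{p-1}{2p}} > 2^{1/2}$. Raising both sides to the positive power $2p$ (a monotone operation on positive reals, hence an equivalence) turns this into $p^{\,p-1} > 2^{p}$.

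Next I would dispose of $p^{\,p-1} > 2^{p}$ by a short direct estimate rather than by calculus. Dividing by $2^{p-1}$ rewrites it as $\left(\tfrac{p}{2}\right)^{p-1} > 2$, and for $p \geq 3$ we have $\tfrac{p}{2} \geq \tfrac32$ together with $p-1 \geq 2$, so $\left(\tfrac{p}{2}\right)^{p-1} \geq \left(\tfrac32\right)^{2} = \tfrac94 > 2$, which closes the argument. (One could instead take logarithms and use that $(1 - 1/p)\ln p$ is increasing in $p$ with value $\tfrac23\ln 3 > \ln 2$ at $p=3$, but the power estimate has the advantage of needing no monotonicity input.)

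There is essentially no serious obstacle here; the main point requiring care is purely bookkeeping, namely checking that each step — multiplying by $p$, and raising to the power $2p$ — is genuinely an equivalence, and that the final estimate $\left(\tfrac{p}{2}\right)^{p-1} \geq \left(\tfrac32\right)^{2}$ legitimately uses both hypotheses $p \geq 3$ (for the base) and $p-1 \geq 2$ (for the exponent), so that $p=3$ is correctly covered as the boundary case.
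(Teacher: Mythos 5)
Your proposal is correct and follows essentially the same route as the paper: both reduce the claim, by raising to the power $2p$, to the elementary inequality $p^{p-1} > 2^p$. The only difference is that the paper simply asserts this last inequality is equivalent to $p \geq 3$, while you supply the short estimate $\left(\tfrac{p}{2}\right)^{p-1} \geq \left(\tfrac{3}{2}\right)^{2} > 2$, which is a harmless (and welcome) extra detail.
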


\begin{proof}
We have
\[ l(D_{2p}) = {2}^{-\frac{1}{2}} \cdot  p^{-\frac{p+1}{2p}}  > p^{-1}  
\Longleftrightarrow  2^{-p} \cdot p^{-p-1} > p^{-2p}  \Longleftrightarrow p^{p-1} > 2^p \Longleftrightarrow p\geq 3.\]
\end{proof}

Now, we are in a position to prove Theorem \ref{main}.

\begin{proof}[Proof of Theorem \ref{main}]
Suppose $p$ is an odd prime dividing the order of $G$.
We start proving that if $l(G) \geq l(D_{2p})$ then either $G \cong D_{2p}$ or $l(G) > l(D_{2p})$ and $G$ is $p$-nilpotent. Applying Lemma  \ref{p.smaller} and then Lemma \ref{cyclic.sylow} we immediately get that $P:=O_p(G)$ is a Sylow $p$-subgroup of $G$ and it is cyclic.
Aiming for a contradiction, suppose there exists a finite group $G$ such that $l(G) \geq l(D_{2p})$ but either $l(G) = l(D_{2p})$ and $G$ is not isomorphic to $D_{2p}$ or $l(G) > l(D_{2p})$ and $G$ is not $p$-nilpotent.
We can take such a $G$ minimal with respect to the order. In particular, every finite group $H$ with $|H| < |G|$ and $l(H)> l(D_{2p})$ is $p$-nilpotent.
\begin{description}
\item[Claim 1: $ \bf O_{p'}(G) = \Phi(G) = 1$] Let $N \in \{O_{p'}(G), \Phi(G)\}$ and, aiming for a contradiction, suppose $N \neq 1$. 
By Lemma \ref{lem:strict.quotient} we have $l(G/N) > l(G) \geq l(D_{2p})$. By minimality of $G$ we deduce that $G/N$ is $p$-nilpotent and so $G$ is $p$-nilpotent by Lemma \ref{quotients}, a contradiction. Thus $N=1$, as wanted.

\item[Claim 2: $ \bf P \cong C_p$]
 Since $P$ is normal in $G$, we deduce that $\Phi(P) \leq \Phi(G)$ by Lemma \ref{lem:Op-Frat}. Now, $\Phi(G) = 1$ by Claim 1, so $\Phi(P) = 1$ and $P \cong C_p$.

\item[Claim 3: $ \bf C_G(P) = P$] Note that $P\leq C_G(P)$, as $P$ is abelian, so $P \in \Syl_p(C_G(P))$. Since $P\norm C_G(P)$, the Schur-Zassenhaus Theorem \cite[9.1.2]{ROB} gives $C_G(P) = PH$ for some subgroup $H$ of $G$ of $p'$-order. Note that $H \norm C_G(P)$, as $P \leq Z(C_G(P))$. Since $P$ is characteristic in $G$, so is $C_G(P)$ and we deduce that $H \leq O_{p'}(G)=1$ by Claim 1.  Therefore $P=C_G(P)$.

\item[Claim 4: $\bf G = PQ$ where $|Q| = q$ for some prime $\bf q \neq p$] Since $P \norm G$, the Schur-Zassenhaus Theorem \cite[9.1.2]{ROB} implies that there exists a subgroup $H$ of $G$ such that $G=PH$ and $P \cap H =1$.  Note that Claim 3 gives $H \cong G/P = N_G(P)/C_G(P)$, and so $H$ is isomorphic to a subgroup of $\Aut(C_p) \cong C_{p-1}$. Thus $H$ is cyclic. If $H=1$ then $G=P$ is a cyclic $p$-group, so $p$-nilpotent, and by Lemma \ref{cyclic.group} we get $l(G) > l(D_{2p})$, a contradiction. Therefore $H$ is a non-trivial cyclic group of $p'$-order. 
Let $q$ be a prime divisor of $|H|$ and let $Q$ be the unique subgroup of $H$ of order $q$. Set $G_1=PQ \leq G$. Note that $C_{G_1}(P) = P$, so $N_{G_1}(P)/C_{G_1}(P) = G_1/P \cong Q$ is a $p'$-group. By Frobenius normal $p$-complement theorem (\cite[10.3.2]{ROB}) we deduce that $G_1$ is not $p$-nilpotent. 
By Lemma \ref{lem:semidirect}(1) applied first to $G=PH$ and then to $G_1=PQ$ we get  
$l(G) = \theta_p(|H|)l(H)$ and $l(G_1)=\theta_p(|Q|)l(Q)$. We want to prove that $Q=H$. Suppose for a contradiction that $Q < H$. Since $H$ is cyclic, by Lemma~\ref{prop}(1) we get $l(Q) > l(H)$. Using the fact that the function $\theta_p(x)$ is decreasing we obtain
\[
l(G_1)=\theta_p(|Q|)l(Q) > \theta_p(|Q|)l(H) \geq \theta_p(|H|)l(H) = l(G) \geq  l(D_{2p}).
\]
So $l(G_1) > l(D_{2p})$ and the minimal choice of $G$ implies that $G_1$ is $p$-nilpotent, a contradiction. Thus $Q=H$ and $G=PQ$.
 
\end{description}

Claims 1 to 4 show that $G$ satisfies the assumptions of part (2) of Lemma \ref{lem:semidirect} and so we deduce that either $G \cong D_{2p}$ or $l(G) < l(D_{2p})$, a contradiction.

We showed that if $G$ is a finite group satisfying $l(G) \geq l(D_{2p})$ then either $G \cong D_{2p}$ or $l(G) > l(D_{2p})$ and $G$ is $p$-nilpotent.

Finally, if $l(G) > l(D_{2p})$, then  the group $O_{p'}(G)$ is a normal $p$-complement and we have already shown that $O_p(G)\in \Syl_p(G)$, so $G = O_p(G) \times O_{p'}(G).$
\end{proof}

Next, we show how Theorem \ref{main} together with \cite[Main Theorem]{AKpnil} and direct computations imply Theorem \ref{p=3}.

\begin{proof}[Proof of Theorem \ref{p=3}]
Suppose $f(G) > f(D_{2p})$.
By Theorem \ref{main} and \cite[Main Theorem]{AKpnil} we have $G = O_p(G) \times O_{p'}(G)$ with $O_p(G)$ cyclic. Since $p$ divides the order of $G$, we also have $O_p(G) \neq 1$ and so by Lemma \ref{prop}(1)  we get $f(O_p(G)) \leq f(C_p)$. Using Lemma \ref{prop}(2) we deduce that
\[ f(C_p)f(O_{p'}(G)) \geq f(O_p(G))f(O_{p'}(G)) = f(G) > f(D_{2p}) \]
and so by Lemma \ref{cyclic.group} we have

\[ \psi''(O_{p'}(G))  >  \frac{\psi''(D_{2p})}{\psi''(C_p)} = \frac{\psi''(D_{2p})}{\frac{p^3 +1}{p^3 + p^2}} = \frac{\frac{p^2+p+1}{4p^2}}{\frac{p^3 +1}{p^3 + p^2}} = \frac{p^2 + p +1}{4(p^2 - p +1)} \]

and 

\[ l(O_{p'}(G))  >  \frac{l(D_{2p})}{l(C_p)} = \frac{l(D_{2p})}{p^{-\frac{1}{p}}} = 2^{-{1/2}}p^{\frac{1-p}{2p}}. \]

The functions $\frac{p^2 + p +1}{4(p^2 - p +1)}$ and  $2^{-{1/2}}p^{\frac{1-p}{2p}}$ are decreasing for every prime $p\geq 3$.

Suppose $p=3$. Then we get
\[\psi''(O_3'(G)) > \frac{13}{28} > \psi''(C_2 \times C_2)  \quad \text{ and } \quad 
l(O_{3'}(G))  >  2^{-{1/2}}3^{-{1/3}} > l(C_2 \times C_2) \]
 and in both cases by Theorem \ref{Thm1.1} we deduce that the group $O_{3'}(G)$ is cyclic. Thus $G$ is the direct product of two cyclic groups of coprime order and is therefore cyclic.

 Suppose $p=5$. Then 
 \[ \psi''(O_5'(G)) > \frac{31}{84} > \psi''(S_3)  \quad \text{ and } \quad  l(O_{5'}(G))  >  2^{-{1/2}}5^{-{2/5}} > l(S_3) \]
 and again by Theorem \ref{Thm1.1} we conclude that $O_{5'}(G)$ is nilpotent and so $G$ is nilpotent. 

Now, by Theorem \cite[Main theorem]{AKpnil}, if $\psi''(G) > \psi''(D_{2p})$ then $G$ is supersoluble for any prime $p$. As for the function $l$, suppose $p\leq 13$. Then
\[ l(O_{p'}(G))  > 2^{-{1/2}}p^{\frac{1-p}{2p}} >  2^{-{1/2}}(13)^{-{12/26}} > l(A_4) \]
 and by \cite[Theorem 1.1(b)]{AK} we conclude that $O_{p'}(G)$ is supersoluble and so $G$ is supersoluble. 
 \end{proof}

We conclude this section proving Corollary \ref{cor:odd}.

\begin{proof}[Proof of Corollary \ref{cor:odd}]
Note that the function $x^{-\frac{x+1}{2x}}$ is decreasing for every $x>0$. Hence if $2 < p < q$ then $l(D_{2p}) > l(D_{2q})$. Thus by Theorem \ref{main} we obtain that $G$ is $q$-nilpotent with $O_q(G)$ cyclic for every prime $q$. Therefore $G$ is cyclic. The second statement follows from the first statement and Lemma \ref{cyclic.few.divisors}.
\end{proof}

\section{Acknowledgements}
The authors are members of the ``National Group for Algebraic and Geometric Structures, and their Applications'' (GNSAGA - INdAM).

\bibliography{books.prod}
\end{document}